\newtheorem{theorem}{Theorem}[section]
\theoremstyle{plain}
\newtheorem{corollary}[theorem]{Corollary}
\newtheorem{definition}[theorem]{Definition}
\newtheorem{example}[theorem]{Example}
\newtheorem{lemma}[theorem]{Lemma}
\newtheorem{problem}[theorem]{Problem}
\newtheorem{proposition}[theorem]{Proposition}
\newtheorem{remark}[theorem]{Remark}
\numberwithin{equation}{section}
\begin{document}
\title{ {Hypersimple Rings and Modules}}
\author[Lomp]{Christian Lomp}
\address{CMUP, Departamento de Matemática, Faculdade de Ciências, Universidade do Porto, rua do Campo Alegre s/n, 4169-007 Porto, Portugal.}
\email{clomp@fc.up.pt}
\author[Yousif]{Mohamed Yousif}
\address{Department of Mathematics, The Ohio State University, Lima, Ohio 45804, USA.}
\email{yousif.1@osu.edu}
\author[Zhou]{Yiqiang Zhou}
\address{Department of Mathematics and Statistics, Memorial University of Newfoundland, St. John's, NL,
A1C 5S7, Canada.}
\email{zhou@mun.ca}\subjclass[2010]{Primary 16D40, 16D50, 16D60; Secondary 16L30, 16L60, 16P20, 16P40, 16P60.}
\date{
%TCIMACRO{\TeXButton{today}{\today}}%
%BeginExpansion
\today
%EndExpansion
}
\keywords{Hopfian, co-Hopfian, Dedekind-finite Rings and Modules, Self-injective Rings}
\dedicatory{Dedicated to Andr{\'e} Leroy on his retirement}
\begin{abstract}In this paper a simple right $R$-module $S$ over a ring $R$ is called hypersimple if its injective hull $E (S)$ is cyclic, and a ring $R$ is called right hypersimple if every simple right $R$-module is hypersimple. We intiate a study of these new notions, and revisit Osofsky's work on hypercyclic rings, i.e. rings whose cyclic right modules have cyclic injective hulls. 
\end{abstract}
\maketitle

\section{Introduction}
The Pr{\"u}fer groups are the injective hulls of the simple Abelian groups and they are Artinian, but not Noetherian.
More generally any injective hull of a simple module over a commutative Noetherian ring is Artinian as it was shown by Matlis in his seminal work \cite{M}.
Other finiteness conditions on the injective hull of modules have been considered for example by Rosenberg and Zelinsky in \cite{RZ} and Faith in \cite{Faith}. Faith and Walker for example proved in \cite[Theorem 5.5]{FW}
that a ring $R$ is quasi-Frobenius if and only if any injective right $R$-module is a direct sum of cyclic modules which are isomorphic to principal indecomposable right ideals of $R$. Furthermore, by Zorn's lemma one can easily prove that every injective module is the injective hull of a direct sum of cyclic modules.
In \cite{O2}, Osofsky studied the rings whose cyclic modules are injective
and showed that such rings are precisely the semisimple Artinian ones. Inspired by this result, Caldwell in \cite{C}
studied a class of rings, called (right)  {hypercyclic rings}, whose cyclic right modules have cyclic injective hulls. He proved that
a left perfect, right hypercyclic ring is Artinian and uniserial. Hypercyclic rings were thoroughly investigated by Caldwell for commutative rings in \cite{C}, and by Osofsky for noncommutative rings in \cite{O1}.
However, the only example of a hypercyclic ring that is not semisimple Artinian was provided by Caldwell in \cite{C},
and such a ring is commutative and self-injective. Furthermore, Caldwell has asked in his thesis \cite[Page 53]{C 1}
whether every hypercyclic ring is self-injective, and conjectured yes as an answer to his question. The conjecture still remains open, among several other
questions on the subject. For example it is not known if the notion of hypercyclic rings is left-right symmetric. Moreover, Osofsky asked in \cite{O1}
if the (Jacobson) radical of local hypercyclic rings is nil. 

Motivated by Caldwell's conjecture, it was shown in \cite{IY 5}
that if $R$ is a ring such that $E (R_{R})$ is cyclic and Dedekind-finite, then $R$ is right self-injective. In particular, if $E (R_{R})$ is cyclic then $R$ is right self-injective in the cases where $R$ is commutative, finite-dimensional, semilocal, or strongly $\pi $-regular. 

\bigskip We should point out that there is a gap in the proof of one of the
main results in \cite[Proposition 1.6]{O1},
where it was claimed that if $R$ is a semilocal right hypercyclic ring, then $s o c (R_{R})  \subseteq ^{e s s}R_{R}$. Subsequently, Lemma 1.12, Lemma 1.15 and Theorem 1.18 of \cite{O1}
remain uncertain. Moreover, because of the gap in \cite[Proposition 1.6]{O1},
\cite[Theorem 3.4]{JS}
is uncertain. 

\bigskip In this paper a simple right $R$-module $S$ is called hypersimple if $E (S)$ is cyclic, and a ring $R$ is called right hypersimple if $E (S)$ is cyclic for every simple right $R$-module $S$. We initiate a study of hypersimple rings and modules, and show that some of the work on hypercyclic rings can be obtained by requiring
only $E (R_{R})$ and/or $E (R/J (R))$ to be cyclic. 

\bigskip While we are unable to fix the gap in \cite[Proposition 1.6]{O1},
we rectify the situation by requiring the ring $R$ to be essentially right duo; i.e. every essential right ideal of $R$ is two-sided. Under this extra mild condition, Osofsky's result follows immediately by mimicking her argument, and some of her other results can be streamlined.  For example, we prove in Theorem \ref{If R is semilocal ess rt duo with E(R) and E(R/J(R)) are cyclic, then R is rt PF-ring}, that if
$R$ is a semilocal essentially right duo ring such that $E (R_{R})$ and $E (R/J (R))$ are cyclic modules, then $R$ is right pseudo-Frobenius whose right ideals are quasi-injective. 

\bigskip Throughout,
all rings $R$ are associative with unity and all modules are unitary. For a module $M$, we use $r a d (M)$, $s o c (M)$, $E (M)$ and $E n d (M)$ to denote the Jacobson radical, the socle, the injective hull and the endomorphism ring of $M$, respectively$\text{.}$ For a ring $R$, $r a d (_{R} R) =r a d (R_{R})$, which is the Jacobson radical of $R$ and is denoted by $J (R)$ or $J$. In general, $s o c (_{R} R) \neq s o c (R_{R})$, but we simply use $s o c (R)$ when $s o c (_{R} R) =s o c (R_{R})$. We write $N \subseteq M$\ if $N$ is a submodule of $M$, $N  \subseteq ^{e s s}M$ if $N$ is an essential submodule of $M$, $N  \subseteq ^{ \oplus }M$ if $N$ is a direct summand of $M$, and $N \ll M$ if $N$ is a small submodule of $M$. We will write $N^{k}$ for a direct sum of $k$ copies of $N$, and $U (R)$ for the set of units of $R$. We use $r_{R} (x)$ to denote the right annihilator of $x$ in $R$, and $l_{R} (x)$ the left annihilator of $x$ in $R$. 

\bigskip 

\section{Hypersimple Rings}
\begin{definition}
A ring $R$ is called left (resp. right)  {hypersimple} if the injective hull of any simple left (resp. right) $R$-module is a cyclic $R$-module. 
\end{definition}

While Caldwell's hypercyclic rings demand that the injective hulls of cyclic modules (in
particular of the module $R_{R}$) are cyclic, the restriction to simple modules seems to be a real weakening.

\begin{example}
If $R$ is right Kasch (i.e. every simple right $R$-module is embedded in $R_{R}$) and $E (R_{R})$ is cyclic, then clearly $R$ is right hypersimple. 
\end{example}

\begin{lemma}
\label{projective_implies_cyclic} Let $R$ be any ring and $S$ a simple right $R$-module. If $E (S)$ is projective, then it is also cyclic and in particular isomorphic to a right ideal of $R$ generated by an idempotent. 
\end{lemma}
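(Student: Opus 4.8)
The plan is to exploit the two properties of $E(S)$ simultaneously: projectivity lets us embed it into a free module and detect a nonzero coordinate, while injectivity lets us split off the resulting copy inside $R_R$.

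\medskip

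\noindent\textbf{Step 1: Land inside a free module.} Since $E(S)$ is projective, it is a direct summand of some free right $R$-module $F=R^{(\Lambda)}$ with standard basis $\{e_\lambda\}_{\lambda\in\Lambda}$. Pick any $0\neq s\in S\subseteq E(S)\subseteq F$. As $F$ is a \emph{direct sum}, $s$ has finite support, so $s=\sum_{\lambda\in\Lambda}e_\lambda s_\lambda$ with only finitely many $s_\lambda\neq 0$; in particular there is an index $\mu$ with $s_\mu\neq 0$. Let $\pi\colon F\to e_\mu R\cong R_R$ be the $\mu$-th coordinate projection, so $\pi(s)=e_\mu s_\mu\neq 0$.

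\medskip

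\noindent\textbf{Step 2: The restriction to $E(S)$ is monic.} Consider $\varphi:=\pi|_{E(S)}\colon E(S)\to R_R$ and put $N=\Ker(\varphi)\subseteq E(S)$. Then $N\cap S$ is a submodule of the simple module $S$, and it cannot equal $S$ since $s\in S\setminus N$; hence $N\cap S=0$. Because $S\subseteq^{ess}E(S)$ by definition of the injective hull, this forces $N=0$. Thus $\varphi$ is a monomorphism and $E(S)\cong\varphi(E(S))\subseteq R_R$.

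\medskip

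\noindent\textbf{Step 3: Split off in $R_R$ and conclude.} The submodule $\varphi(E(S))$ of $R_R$ is injective (being isomorphic to $E(S)$), so the identity map $\varphi(E(S))\to\varphi(E(S))$ extends along the inclusion to a retraction $R_R\to\varphi(E(S))$; hence $\varphi(E(S))$ is a direct summand of $R_R$. Every direct summand of $R_R$ has the form $eR$ for an idempotent $e\in R$. Therefore $E(S)\cong eR$, which is in particular cyclic, as claimed.

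\medskip

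The only nonroutine point is the choice in Step 1 to project onto a single coordinate of $F$ rather than onto the summand $E(S)$ itself; once that is set up, the essentiality argument in Step 2 and the standard ``injective submodule is a summand'' fact in Step 3 are immediate. No hypotheses beyond ``$R$ arbitrary, $S$ simple'' are needed.
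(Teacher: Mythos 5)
Your proof is correct and follows essentially the same route as the paper: embed the projective module $E(S)$ into a free module, use a coordinate projection that is nonzero on $S$ together with simplicity of $S$ and essentiality of $S$ in $E(S)$ to get an embedding $E(S)\hookrightarrow R_R$, and then split it off as a summand $eR$ by injectivity. Your Step 1 merely makes explicit (via finite support) the paper's assertion that some coordinate projection is nonzero on $S$; otherwise the arguments coincide.
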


\begin{proof}
Since $E (S)$ is projective, there exists an embedding $\sigma  :E (S) \rightarrow F = \oplus _{i \in I}R$ of $E (S)$ into a free module. Now, there is a projection map $\pi  :F \rightarrow R$ such that the composition $\pi  \circ \sigma _{ \mid _{S}} :S \rightarrow R$ is non-zero, and hence a monomorphism. But then $\pi  \circ \sigma  :E (S) \rightarrow R$ is non-zero, and hence a monomorphism since $S  \subseteq ^{e s s}E (S)$. This shows that $E (S)$ embeds into $R$ and so it is isomorphic to a direct summand of $R_{R}$.That is, $E (S) \cong e R$ for some $e^{2} =e \in R$. 
\end{proof}

\begin{example}\label{exa:cogenerator}
\label{example_cogenerator} A ring $R$ is a right cogenerator ring if and only if all injective hulls of simple right $R$-modules are projective. It follows from Lemma \ref{projective_implies_cyclic} that right cogenerator rings are right hypersimple. 
\end{example}

\begin{example}
A ring $R$ is called right pseudo-Frobenius (right $P F$-ring) if $R$ is an injective cogenerator in Mod-$R$; equivalently if $R$ is a semiperfect right self-injective ring with essential right socle. By Example \ref{exa:cogenerator}, right $P F$-rings are right hypersimple. 
\end{example}

\begin{theorem}
\label{Pseudo-Frobenius Rings in terms of E(S) being projective}
The following conditions on a ring $R$ are equivalent:

\begin{enumerate}
\item $R$ is right pseudo-Frobenius.

\item $R$ is semilocal and $E (S)$ is projective, for all simple right $R$-modules $S$.

\item $E (S)$ is projective whenever $S$ is a simple right $R$-module and there are only finitely many isomorphism classes of simple right $R$-modules. \end{enumerate}
\end{theorem}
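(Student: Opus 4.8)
The plan is to prove the cycle $(1)\Rightarrow(2)\Rightarrow(3)\Rightarrow(1)$, with the first two implications being essentially formal and all the work concentrated in $(3)\Rightarrow(1)$.

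\emph{The implications $(1)\Rightarrow(2)$ and $(2)\Rightarrow(3)$.} A right pseudo-Frobenius ring is semiperfect, hence semilocal, and by definition it is an injective cogenerator in Mod-$R$; by Example~\ref{exa:cogenerator} the latter already forces $E(S)$ to be projective for every simple right $R$-module $S$, which gives $(1)\Rightarrow(2)$. If $R$ is semilocal then $R/J(R)$ is semisimple, so there are only finitely many isomorphism classes of simple right $R$-modules, while the projectivity of the $E(S)$ is part of the hypothesis of $(2)$; hence $(2)\Rightarrow(3)$.

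\emph{The implication $(3)\Rightarrow(1)$.} Let $S_1,\dots,S_n$ be representatives of the isomorphism classes of simple right $R$-modules. By Lemma~\ref{projective_implies_cyclic}, $E(S_i)\cong e_iR$ for idempotents $e_i$, so each $E(S_i)$ is an indecomposable injective direct summand of $R_R$ with $\mathrm{soc}(e_iR)=S_i$; in particular $R$ is right Kasch, and by Example~\ref{exa:cogenerator} $R$ is a right cogenerator ring. I would first establish that $R$ is semiperfect, and then argue as follows. A right cogenerator ring is right self-injective, so (being semiperfect) $R_R=\bigoplus_j f_jR$ with the $f_j$ local idempotents and each principal indecomposable $f_jR$ injective. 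There are exactly $n$ isomorphism classes of principal indecomposables, matched with $S_1,\dots,S_n$ via their tops; and $e_1R,\dots,e_nR$ are $n$ pairwise non-isomorphic principal indecomposables, distinguished by their socles $S_i$. Hence every $f_jR$ is isomorphic to some $e_iR$, so every $f_jR$ is injective, so $R_R$ is a finite direct sum of injective modules and is therefore injective; moreover $\mathrm{soc}(R_R)=\bigoplus_j\mathrm{soc}(f_jR)$ is essential in $R_R$. Thus $R$ is a semiperfect right self-injective ring with essential right socle, i.e. right pseudo-Frobenius, which is $(1)$.

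It remains to prove that $R$ is semiperfect, which I expect to be the main obstacle. Since $R$ is a right cogenerator ring it is right self-injective, so by Utumi's theorem $R/J(R)$ is von Neumann regular and idempotents lift modulo $J(R)$; it therefore suffices to show that $R/J(R)$ is semisimple. This is the one place where the finiteness in $(3)$ must be used essentially — a right cogenerator ring need not be semiperfect — and the route I would take is: $\overline R:=R/J(R)$ is von Neumann regular with only finitely many isomorphism classes of simple right modules, so if it were not semisimple it would fail to be right Noetherian and would contain an infinite orthogonal family $\{g_k\}$ of nonzero idempotents; each $g_k\overline R$ is a nonzero cyclic module, hence has a simple quotient isomorphic to one of the $S_i$, and a pigeonhole argument on the finitely many simple types, combined with the cogenerator and injectivity structure already available (notably that $\bigoplus_{i=1}^n E(S_i)$ is a finitely generated injective cogenerator), should force a contradiction. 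Extracting this contradiction cleanly is the crux of the proof.
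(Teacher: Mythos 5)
Your implications $(1)\Rightarrow(2)$ and $(2)\Rightarrow(3)$ are fine and essentially match the paper. For $(3)\Rightarrow(1)$ the paper does exactly one thing after observing, via Lemma~\ref{projective_implies_cyclic} and Example~\ref{exa:cogenerator}, that $R$ is a right cogenerator ring: it invokes \cite[Theorem 19.25]{Lam}, the theorem that a right cogenerator ring with only finitely many isomorphism classes of simple right modules is right pseudo-Frobenius. You reach the same intermediate point ($R$ is right Kasch and a right cogenerator with the $E(S_i)\cong e_iR$ indecomposable injective projectives) but then try to reprove that theorem from scratch, and this is where the argument breaks.

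The concrete gap is the assertion, made twice and load-bearing in your semiperfectness argument, that ``a right cogenerator ring is right self-injective.'' This is false: the paper's own Example~\ref{Osofsky's Example} (Osofsky's algebra from \cite{O3}) is explicitly a non-injective right cogenerator ring. The implication only becomes true once one already knows $R$ is right PF, so using it here is circular. Without it you cannot apply Utumi's theorem to conclude that $R/J(R)$ is von Neumann regular with idempotent lifting, so your reduction of semiperfectness to ``$R/J(R)$ is semisimple'' is unsupported; and even granting that reduction, you explicitly leave the final pigeonhole contradiction --- which is the actual content of the cited theorem --- unextracted. By contrast, the Krull--Schmidt portion of your argument (once $R$ is known to be semiperfect, the $e_iR$ exhaust the isomorphism classes of principal indecomposables, so $R_R$ is injective with essential socle, hence right PF) is correct and does not even require the false claim. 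The proof is therefore salvageable either by citing \cite[Theorem 19.25]{Lam} as the paper does, or by supplying an honest proof that $R$ is semiperfect, which is precisely the hard step you have deferred.
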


\begin{proof}
$(1) \Rightarrow (2)$. Let $S$ be a simple right $R$-module. Since $R$ is right Kasch, $S$ embeds in $R$, and so $E (S)$ is a summand of $R$, and hence projective. 

$(2) \Rightarrow (3)$. The implication is obvious. 

$(3) \Rightarrow (1)$. By Example \ref{example_cogenerator}, $R$ is a right cogenerator ring. By \cite[Theorem 19.25]{Lam},
$R$ is right pseudo-Frobenius. 
\end{proof}

\begin{example}
\cite[Example 2]{O3}\label{Osofsky's Example}
Let $R$ be an algebra over a field $F$ with basis $\left \{1\right \} \cup \left \{e_{i} \mid i \geq 0\right \} \cup \left \{x_{i} \mid i \geq 0\right \}$ such that $1$ is the unity of $R$ and, for all i and j, $e_{i} e_{j} =\delta _{i j} e_{j}\text{,}$ $x_{j} e_{i} =\delta _{i ,j -1} x_{j}\text{,}$ $e_{i} x_{j} =\delta _{i j} x_{j}\text{,}$ and $x_{i} x_{j} =0\text{,}$ where $\delta _{i j}$ is the Kronecker delta. As shown in \cite{O3}, $R$ is a non-injective right cogenerator ring that is neither left Kasch nor semilocal. Each $e_{i} R$ is injective and $x_{i} R =s o c (e_{i} R)  \subseteq ^{e s s}e_{i} R\text{,}$ for all $i \geq 0.$ Clearly, if $I$ is a minimal right ideal of $R$ then $I =x_{i} R$ for some $i \geq 0\text{,}$ and $x_{i} R \cong x_{j} R$ if and only if $i =j$. By $(2)$ of Theorem \ref{Pseudo-Frobenius Rings in terms of E(S) being projective},
$R$ is a right hypersimple ring, that is not right self-injective. Moreover, since $R$ is not semilocal, it follows from \cite[Lemma 4.1]{NY},
that $R$ is not left self-injective.\ In particular, $R$ is neither left nor right hypercyclic. 
\end{example}

\begin{example}
A ring $R$ is called a $V$-ring if every simple left (resp. right) $R$-module is injective. For such a ring $R$, any simple left $R$-module $S$ coincides with its injective hull $E (S) =S$ and is therefore cyclic. Thus $V$-rings provide a large class of hypersimple rings. It is well-known, that a commutative ring is a $V$-ring if and only if it is von Neumann regular (see \cite{MV}).
There exist also non-commutative Noetherian $V$-rings that are domains (see \cite{CF}). Moreover, it
was shown in \cite[Corollary 2.7]{IY 5}
that if $R$ is a commutative ring such that $E (R)$ is cyclic then $R$ is self-injective. This shows that every commutative regular ring that is not self-injecive is an example of a hypersimple ring that
is not hypercyclic. 
\end{example}

\begin{example}
A submodule $N$\ of a right $R$-module $M$ is said to lie over a direct summand of $M$ if, there is a decomposition $M =M_{1} \oplus M_{2}$ with $M_{1} \subseteq N$ and $N \cap M_{2} \ll M$. A module $M$ is called lifting if every submodule $N$ of $M$ lies over a direct summand of $M$. A ring $R$ is called right Harada ($H$-ring for short) \cite[Theorem 3.1.12]{BO}
if every injective right $R$-module is lifting. $H$-rings are known to be two-sided Artinian, see for example \cite{BO}.
Every right $H$-ring is right hypersimple. For, if $S$ is a simple right $R$-module then $E (S)$ is an indecomposable and lifting module. Consequently, every proper submodule of $E (S)$ is small. Since $R$ is Artinian, $E (S)$ has a maximal submodule, say $N$. Now, if $x \in E (S)$ such that $x \notin N$, then $E (S) =x R +N$, and so $E (S) =x R$. This shows that every right $H$-ring that is not quasi-Frobenius is an example of a right hypersimple ring that is not hypercyclic. 
\end{example}

A ring $R$ is called right uniserial (right chain) if the right ideals of $R$ are totally ordered by inclusion (i.e. if $A$ and $B$ are right ideals of $R$ then $A \subseteq B$ or $B \subseteq A$).

\begin{example}
\label{Example of a 2-sided art ring that is left SF but not rt GSF}
Let $K$ be a field and let $a \mapsto \bar{a}$ be an isomorphism $K \rightarrow \bar{K} \subseteq K$ where the subfield $\bar{K} \neq K\text{.}$ Let $R$ be the left vector space over $K$ with basis $\{1 ,t\}$ satisfying $t^{2} =0$ and $t a =\bar{a} t$ for all $a \in K$. Then $R$ is a local ring with $R/J \cong K$ and $J^{2} =0.$ Clearly, $J (R) =s o c (R) =R t =K t$ is the only non-trivial left ideal of $R$, in particular $R$ is a local left uniserial and left Artinian ring. Moreover, the ring $R$ is right Artinian if and only if $d i m (_{\bar{K}} K)$ is finite. If $d i m (_{\bar{K}} K) =n \geqslant 2$, then $J (R) =s o c (R)$ as a right ideal is a direct sum of $n$ copies of the same simple module, and if $d i m (_{\bar{K}} K) =\infty $, then $J (R) =s o c (R)$ is the direct sum of infinitely many copies of the same simple module. Furthermore, it is not difficult to see that the following statements
are equivalent:

\begin{enumerate}
\item $E (s o c (R))$ is cyclic as a left $R$-module. 

\item $E (R)$ is cyclic as a left $R$-module. 

\item $R$ is left hypersimple. 

\item $R$ is left self-injective. 

\item $R$ is quasi-Frobenius. 

\item $d i m (_{\bar{K}} K) =1$. \end{enumerate}

This says that if $d i m (_{\bar{K}} K) =n \geq 2$, then $R$ is a local left uniserial and two-sided Artinian ring that is not left hypersimple (hypercyclic). 
\end{example}

Over
commutative rings one is tempted to use local-global arguments to reduce the study of hypersimple rings to local hypersimple rings. For example, Rosenberg
and Zelinsky showed in \cite[Theorem 5]{RZ}
that the injective hulls of simple modules over a commutative ring $R$ have finite length if and only if $R_{\mathcal{M}}$ is Artinian for any maximal ideal $\mathcal{M}$ of $R$, where $R_{\mathcal{M}}$ is the localization of $R$ at $\mathcal{M}$. Vamos showed in \cite[Theorem 2]{V}
that the injective hulls of simple modules over a commutative ring $R$ are Artinian if and only if $R_{\mathcal{M}}$ is Noetherian for any maximal ideal $\mathcal{M}$ of $R$. Carvalho et al. showed in \cite[Theorem 3.3]{CLS}
that any finitely generated submodule of an injective hull of a simple module over a commutative ring $R$ is Artinian if and only if the same is true for any localization $R_{\mathcal{M}}$ by a maximal ideal $\mathcal{M}$ of $R$. It would be interesting to see whether an analogous local-global result holds for commutative hypersimple rings. The following lemma
shows that one implication of this equivalence holds.

\begin{lemma}
Any localization $R_{\mathcal{M}}$ of a commutative hypersimple ring $R$ by a maximal ideal $\mathcal{M}$ is hypersimple. 
\end{lemma}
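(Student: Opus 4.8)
The plan is to reduce everything to the single simple $R_{\mathcal{M}}$-module. Since $R_{\mathcal{M}}$ is a commutative local ring with maximal ideal $\mathcal{M}R_{\mathcal{M}}$, every simple $R_{\mathcal{M}}$-module is isomorphic to $k:=R_{\mathcal{M}}/\mathcal{M}R_{\mathcal{M}}\cong R/\mathcal{M}$, so it suffices to show that the injective hull of $k$ over $R_{\mathcal{M}}$ is a cyclic $R_{\mathcal{M}}$-module. The idea is to prove that the $R$-injective hull $E:=E(R/\mathcal{M})$, which is cyclic because $R$ is hypersimple, is already the $R_{\mathcal{M}}$-injective hull of $k$.

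First I would equip $E$ with a natural $R_{\mathcal{M}}$-module structure extending its $R$-structure. For $s\in R\setminus\mathcal{M}$, multiplication $\mu_{s}:E\rightarrow E$ restricts on $S:=R/\mathcal{M}$ to multiplication by the unit $\bar{s}\in R/\mathcal{M}$, hence is injective on $S$; since $S\subseteq^{\mathrm{ess}}E$ this forces $\Ker\mu_{s}=0$, so $\mu_{s}$ is a monomorphism, $\mu_{s}(E)$ is an injective — hence direct-summand — submodule of the indecomposable module $E$, and as $\mu_{s}(E)\supseteq\mu_{s}(S)=S\neq 0$ we get $\mu_{s}(E)=E$. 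Thus every element of $R\setminus\mathcal{M}$ acts invertibly on $E$, and by the universal property of localization $E$ becomes an $R_{\mathcal{M}}$-module with $(r/1)x=rx$.

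The key step is then that $E$ is injective as an $R_{\mathcal{M}}$-module, which I would verify via Baer's criterion. Every ideal of $R_{\mathcal{M}}$ has the form $I=JR_{\mathcal{M}}$ with $J:=\varphi^{-1}(I)$ an ideal of $R$, where $\varphi:R\rightarrow R_{\mathcal{M}}$; given an $R_{\mathcal{M}}$-linear map $f:I\rightarrow E$, the composite $f\circ\varphi\!\mid_{J}:J\rightarrow E$ is $R$-linear, so by $R$-injectivity of $E$ it is multiplication by some $e\in E$. Then $h:R_{\mathcal{M}}\rightarrow E$, $h(q)=qe$, is $R_{\mathcal{M}}$-linear, agrees with $f$ on $\varphi(J)$, and hence on all of $I=JR_{\mathcal{M}}$ by $R_{\mathcal{M}}$-linearity. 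So $f$ extends, and $E$ is $R_{\mathcal{M}}$-injective.

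Finally, $S$ is essential in $E$ as $R_{\mathcal{M}}$-modules, since every nonzero $R_{\mathcal{M}}$-submodule is in particular a nonzero $R$-submodule and therefore meets $S$; so $E$, with the inclusion $S\hookrightarrow E$, is an injective hull of $S\cong k$ over $R_{\mathcal{M}}$. As $E$ is cyclic over $R$, say $E=xR$, it is also cyclic over $R_{\mathcal{M}}$ because $xR\subseteq xR_{\mathcal{M}}\subseteq E=xR$. Hence $E(k)$ is a cyclic $R_{\mathcal{M}}$-module and $R_{\mathcal{M}}$ is hypersimple. I expect the middle step — recognizing that $R$-injectivity of $E(R/\mathcal{M})$ passes to $R_{\mathcal{M}}$-injectivity — to be the only real obstacle, but Baer's criterion together with the extension-of-ideals description handles it without any Noetherian hypothesis.
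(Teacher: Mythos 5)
Your argument is correct and reaches the conclusion by the same overall route as the paper: identify $E_{R}(R/\mathcal{M})$ with the injective hull of the unique simple $R_{\mathcal{M}}$-module, and then note that an $R$-generator of $E$ is a fortiori an $R_{\mathcal{M}}$-generator. The only real difference is that the paper disposes of the identification in one line by citing \cite[Proposition 5.6]{SV}, whereas you prove it from scratch: elements of $R\setminus\mathcal{M}$ act invertibly on $E$ because $E$ is uniform (injective hull of a simple module) and multiplication by such an element is injective on the essential simple submodule $S$; every ideal of $R_{\mathcal{M}}$ is extended from $R$, so Baer's criterion over $R_{\mathcal{M}}$ reduces to Baer's criterion over $R$; and essentiality of $S$ in $E$ over $R_{\mathcal{M}}$ is automatic since $R_{\mathcal{M}}$-submodules are in particular $R$-submodules. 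All of these steps are sound, no Noetherian or finiteness hypothesis is needed, and the final cyclicity observation $xR\subseteq xR_{\mathcal{M}}\subseteq E=xR$ is exactly the paper's. What your version buys is self-containedness (a complete proof of the Sharpe--V\'amos identification); what the citation buys is brevity. Either is acceptable.
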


\begin{proof}
Let $R$ be a commutative ring, $\mathcal{M}$ a maximal ideal of $R$ and $R_{\mathcal{M}}$ the localization of $R$ by $\mathcal{M}$. By \cite[Proposition
5.6]{SV}, the injective hull $E_{R} (R/\mathcal{M})$ of the simple $R$-module $R/\mathcal{M}$ is also the injective hull of the unique simple $R_{\mathcal{M}}$-module $R_{\mathcal{M}}/\mathcal{M} R_{\mathcal{M}}$. Thus, if $R$ is hypersimple, then $E_{R} (R/\mathcal{M})$ is cyclic as an $R$-module and therefore $E_{R_{\mathcal{M}}} (R_{\mathcal{M}}/\mathcal{M} R_{\mathcal{M}}) =E (R/\mathcal{M})$ is cyclic as $R_{\mathcal{M}}$-module. Hence $R_{\mathcal{M}}$ is hypersimple. 
\end{proof}

%
%\begin{lemma}
%Let $R$ be a commutative local hypersimple ring with unique maximal ideal $\mathcal{M}$. Then there exists an ideal $I$ of $R$ such that $R/I$ is a local pseudo-Frobenius ring. 
%\end{lemma}
%
%
%\begin{proof}
%Since $E (R/\mathcal{M})$ is cyclic, there exists an ideal $I$ with $R/I =E (R/\mathcal{M})$. As $R/I$ is an injective $R$-module and also a ring, it is self-injective. Moreover, $R/I$ is a local ring with unique maximal ideal $\mathcal{M}/I$. On the other hand $R/\mathcal{M}$ is embedded in $R/I$ as an essential ideal. Hence $R/I$ is a self-injective Kasch ring or equivalently a pseudo-Frobenius ring. 
%\end{proof}
%

In \cite[Proposition 1.6]{O1},
Osofsky proved that every semilocal hypercyclic ring has an essential right socle. Unfortunately, there is a gap in the proof of that result, and by slightly
modifying the hypotheses, the same argument can be used to establish Osofsky's result. The problem in the proof of \cite[Proposition 1.6]{O1}
lies in the following argument: if $R$ is semilocal and $E =E (S_{1}) \oplus \cdots  \oplus E (S_{n})$ is the direct sum of the injective hulls of all pairwise non-isomorphic simple modules $S_{i}$, then $E$ is faithful. If the annihilators $L_{i} =r_{R} (E (S_{i})/S_{i}) =\{t \in R :E (S_{i}) t \subseteq S_{i}\}$ were essential right ideals of $R$, for all $i$, then also $L =L_{1} \cap \cdots  \cap L_{n}$ would be essential and since $E L J (R) =0$ and $E$ is faithful, one concludes that $L J (R) =0$, hence $L =s o c (R_{R})$ is essential. The problem is that $L_{i}$ does not need to be essential. If $E (S_{i}) =x_{i} R$ is cyclic, then $L_{i} =r_{R} (x +S_{i}) =\{t \in R :x_{i} t \in S_{i}\} \subseteq ^{e s s}R$ provided $R$ is commutative. However without commutativity, $L_{i}$ might be different from $r_{R} (x +S_{i})$. To remedy the lack of commutativity we introduce a new definition, but first, recall that a submodule $N$ of a module $M$ is fully invariant if every endomorphism of $M$ maps $N$ into $N$. A fully invariant submodule of a module corresponds to a two-sided ideal in a ring. A module $M$ is called duo if every submodule of $M$ is fully invariant, and a ring $R$ is called right duo if $R_{R}$ is a duo module (equivalently if every right ideal of $R$ is an ideal).

\begin{definition}
A right $R$-module $M$ is called essentially duo if every essential submodule of $M$ is fully invariant, and a ring $R$ is called essentially right duo if the module $R_{R}$ is essentially duo (equivalently if every essential right ideal of $R$ is an ideal). 
\end{definition}

The following example show that the class of essentially right duo rings and the
class of right quasi-duo rings, i.e. those whose maximal right ideal is two-sided, are not contained in each other.

\begin{example}
\label{Example of essentially duo that is not quasi-duo}
Any semisimple right $R$-module $M$ is trivially essentially duo, because $M$ has no essential submodules. In particular, any semisimple Artinian ring
\begin{equation*}R =M_{n_{1}} (D_{1}) \times \cdots  \times M_{n_{k}} (D_{k})\text{,}
\end{equation*}for division rings $D_{i}$ and numbers $n_{i} \geq 1$, is left and right essentially duo ring. However, if one of the numbers $n_{i}$ is bigger than $1$, then $R$ contains a right (respectively left) maximal ideal that is not an ideal. Thus $R$ is neither right nor left quasi-duo. Note that $R$ is not even Abelian, where $R$ is called Abelian if its idempotents are central. This shows that there are essentially right duo rings that are not right quasi-duo.

On the contrary, let $R$ be any commutative integral domain that is not a field. Then the ring of lower $2 \times 2$-matrices over $R$, i.e.
\begin{equation*}S =\mathrm{L} \mathrm{T} \mathrm{M}_{2 \times 2} (R) =\left [\begin{array}{cc}R & 0 \\
R & R\end{array}\right ] =\left \{\left [\begin{array}{cc}a & 0 \\
b & c\end{array}\right ] :a ,b ,c \in R\right \}
\end{equation*}is right and left quasi-duo, by \cite[Proposition 2.1]{Y}.
However, if $A$ is any proper non-trivial ideal of $R$, then
\begin{equation*}I =\left [\begin{array}{cc}R & 0 \\
A & A\end{array}\right ] =\left \{\left [\begin{array}{cc}a & 0 \\
b & c\end{array}\right ] :a \in R ,
%TCIMACRO{\TeXButton{\:}{\:}}%
%BeginExpansion
\:
%EndExpansion
b ,c \in A\right \}
\end{equation*}is an essential right ideal of $S$ that is not an ideal of $S$, because for any non-zero element $m =\left [\begin{array}{cc}x & 0 \\
y & z\end{array}\right ] \in S$ and non-zero $a \in A$:
\begin{equation*}0 \neq m a =\left [\begin{array}{cc}x & 0 \\
y & z\end{array}\right ] \left [\begin{array}{cc}a & 0 \\
0 & a\end{array}\right ] =\left [\begin{array}{cc}x a & 0 \\
y a & z a\end{array}\right ] \in I\text{,}
\end{equation*}since $R$ is a domain, $a \neq 0$ and one of the components of $m$ is non-zero. Hence $m S \cap I \neq 0$, i.e. $I$ is an essential right ideal of $S$. On the other hand,
\begin{equation*}\left [\begin{array}{cc}0 & 0 \\
1 & 0\end{array}\right ] \left [\begin{array}{cc}R & 0 \\
A & A\end{array}\right ] =\left [\begin{array}{cc}0 & 0 \\
R & 0\end{array}\right ]
%TCIMACRO{\TeXButton{\not }{\not }}%
%BeginExpansion
\not 
%EndExpansion
 \subseteq I\text{,}
\end{equation*}i.e. $R I
%TCIMACRO{\TeXButton{\not }{\not }}%
%BeginExpansion
\not 
%EndExpansion
 \subseteq I$, which shows that $I$ is not a left ideal of $S$. Thus $S$ is not an essentially right duo ring. This shows that there exist right quasi-duo rings that are not essentially right duo rings. 
\end{example}

\begin{proposition}
\label{If R is semilocal and E(R/J(R)) is cyclic, then R has essential socle}
If $R$ is semilocal, right hypersimple, and essentially right duo, then $s o c (R_{R})  \subseteq ^{e s s}R_{R}$. 
\end{proposition}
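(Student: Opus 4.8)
The plan is to carry out the argument for \cite[Proposition 1.6]{O1} outlined in the paragraphs preceding the statement, inserting the essentially right duo hypothesis at exactly the place where Osofsky's proof breaks down. Since $R$ is semilocal there are only finitely many isomorphism classes of simple right $R$-modules; let $S_{1},\dots ,S_{n}$ be a complete set of representatives and set $E=E(S_{1})\oplus \cdots \oplus E(S_{n})$.

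I would first record two routine facts. (a) For each $i$ one has $s o c(E(S_{i}))=S_{i}$, since any simple submodule of $E(S_{i})$ meets the essential submodule $S_{i}$ and hence equals it; thus $s o c(E)=S_{1}\oplus \cdots \oplus S_{n}$ is semisimple and so $s o c(E)\,J(R)=0$. (b) $E$ is faithful: given $0\neq t\in R$, the nonzero cyclic module $tR$ has a maximal submodule $M$ and $t\notin M$, so $tR/M\cong S_{i}$ for some $i$; composing the quotient map $tR\to tR/M\cong S_{i}$ with an inclusion $S_{i}\hookrightarrow E(S_{i})$ and extending along $tR\subseteq R$ by injectivity of $E(S_{i})$ yields $\varphi :R\to E(S_{i})\subseteq E$ with $0\neq \varphi(t)=\varphi(1)t$, whence $Et\neq 0$.

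Next I would use that $R$ is right hypersimple to write $E(S_{i})=x_{i}R$ for each $i$, and put $A_{i}=r_{R}(x_{i}+S_{i})=\{t\in R:x_{i}t\in S_{i}\}$, a right ideal of $R$. The key step is that each $A_{i}$ is essential in $R_{R}$: for $0\neq t\in R$, either $x_{i}t=0$, in which case $t\in A_{i}$; or $x_{i}t\neq 0$, in which case $x_{i}tR\cap S_{i}\neq 0$ (because $S_{i}\subseteq ^{e s s}E(S_{i})$) forces $S_{i}\subseteq x_{i}tR$ by simplicity of $S_{i}$, so there is $s\in R$ with $0\neq x_{i}ts\in S_{i}$, giving $0\neq ts\in tR\cap A_{i}$. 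Hence $A:=A_{1}\cap \cdots \cap A_{n}$ is an essential right ideal of $R$.

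Now comes the step that requires the extra hypothesis, and which is the crux. Since $A$ is an essential right ideal and $R$ is essentially right duo, $A$ is a two-sided ideal; therefore for $a\in A$ we get $Ra\subseteq A\subseteq A_{i}$, so $x_{i}(Ra)\subseteq S_{i}$, i.e. $E(S_{i})a=x_{i}Ra\subseteq S_{i}$ for every $i$, whence $EA\subseteq s o c(E)$. Then $E\,(AJ(R))=(EA)J(R)\subseteq s o c(E)\,J(R)=0$, and faithfulness of $E$ forces $AJ(R)=0$; a right ideal annihilated on the right by $J(R)$ is a module over the semisimple ring $R/J(R)$, hence is semisimple as a right $R$-module, so $A\subseteq s o c(R_{R})$, and since $A$ is essential, $s o c(R_{R})\subseteq ^{e s s}R_{R}$. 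I expect the main obstacle to be conceptual rather than computational: one must see that the right ideal to work with is $r_{R}(x_{i}+S_{i})$ and that passing from $x_{i}a\in S_{i}$ to $x_{i}Ra\subseteq S_{i}$ — equivalently, identifying $r_{R}(x_{i}+S_{i})$ with the two-sided annihilator $r_{R}(E(S_{i})/S_{i})$ — is precisely the gap in \cite[Proposition 1.6]{O1}; the essentially right duo condition resolves this at once by making the essential right ideal $A$ automatically two-sided.
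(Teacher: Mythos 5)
Your proof is correct and follows essentially the same route as the paper's: the same modules $E(S_i)=x_iR$, the same right ideals $r_R(x_i+S_i)$, the same use of the essentially right duo hypothesis to make them (or their intersection) two-sided, and the same conclusion via faithfulness of the cogenerator $E$ and semisimplicity of $R/J(R)$. The only difference is that you supply explicit verifications (essentiality of each $A_i$, faithfulness of $E$) that the paper leaves as standard facts.
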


\begin{proof}
As $R$ is semilocal, there exists only finitely many non-isomorphic simple right $R$-modules, say $S_{1} ,\ldots  ,S_{n}$. Let $E =E (S_{1}) \oplus \cdots  \oplus E (S_{n})$ be the direct sum of injective hulls of the simples $S_{i}$. By assumption $E (S_{i}) =x_{i} R$ is cyclic, for each $i$. Since $L_{i} =r_{R} (x_{i} +S_{i}) =\{t \in R :x_{i} t \in S_{i}\}$ is an essential right ideal of $R$ and by assumption $L_{i}$ a two-sided ideal, we have $E (S_{i}) L_{i} =x_{i} R L_{i} =x_{i} L_{i} \subseteq S_{i}$. Thus $L_{i} =r_{R} (E (S_{i})/S_{i})$ is essential in $R$ and so is $L =L_{1} \cap \cdots  \cap L_{n}$. Therefore $E L J (R) \subseteq (S_{1} \oplus \cdots  \oplus S_{n}) J (R) =0$ and as $E$ is faithful (since it is a cogenerator), $L J (R) =0$. As $R$ is semilocal, $L \subseteq s o c (R_{R})$, showing $s o c (R_{R})  \subseteq ^{e s s}R_{R}$. 
\end{proof}

\begin{lemma}
\label{If R is semilocal and E(R) is cyclic, then R is semiperfect self-inj}
\cite[Theorem 2.16]{IY 5}
If $R$ is a semilocal ring and $E (R_{R})$ is cyclic, then $R$ is semiperfect and right self-injective. 
\end{lemma}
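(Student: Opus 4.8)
The plan is to first establish that the injective hull $E:=E(R_R)$ is a Dedekind‑finite module, i.e.\ that $\mathrm{End}(E)$ is a Dedekind‑finite ring, and then to invoke the theorem of \cite{IY 5} recalled in the introduction, according to which $R$ is right self‑injective whenever $E(R_R)$ is cyclic and Dedekind‑finite. That $R$ is then semiperfect will follow from standard properties of right self‑injective semilocal rings.

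For the Dedekind‑finiteness, fix a generator, $E=xR$, so that $E$ is finitely generated; since $E\neq 0$, Nakayama's lemma gives $EJ\neq E$, and because $R$ is semilocal the quotient $R/J$ is semisimple Artinian, so $E/EJ$ is a nonzero finitely generated $R/J$‑module, hence semisimple of some finite length $m\geq 1$. Now suppose, for contradiction, that $\mathrm{End}(E)$ is not Dedekind‑finite: pick $f,g\in\mathrm{End}(E)$ with $fg=1_E$ and $e:=gf\neq 1_E$. Then $e$ is an idempotent with $e\neq 1_E$, so $E=eE\oplus N$ where $N:=(1_E-e)E\neq 0$; moreover $eE=gfE=gE$ (as $f$ is surjective), and $g$ is injective, so $eE\cong E$. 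Thus the decomposition reads $E\cong E\oplus N$ with $N\neq 0$. But $N$, being a direct summand of the finitely generated module $E$, is finitely generated and nonzero, so $N/NJ$ is a nonzero semisimple $R/J$‑module, of length $\ell\geq 1$; and passing to tops, $E\cong E\oplus N$ gives $E/EJ\cong E/EJ\oplus N/NJ$, so that $m=m+\ell$ — impossible. Hence $\mathrm{End}(E)$ is Dedekind‑finite.

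By \cite{IY 5}, $R$ is then right self‑injective. Since a right self‑injective ring is semiregular, idempotents lift modulo $J=J(R)$; together with $R/J$ being semisimple Artinian (as $R$ is semilocal), this shows that $R$ is semiperfect, as required. I expect the Dedekind‑finiteness step to be the only real obstacle: the slogan is that the Eilenberg‑swindle decomposition $E\cong E\oplus N$ cannot occur for an $E$ that is cyclic over a semilocal ring, because such an $E$ is finitely generated with semisimple Artinian — hence finite‑length — top $E/EJ$, which cannot absorb the extra nonzero summand $N/NJ$. Everything after that is a direct appeal to results already in hand: the theorem of \cite{IY 5} on cyclic Dedekind‑finite injective hulls, and the standard fact that right self‑injective rings are semiregular.
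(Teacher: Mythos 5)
Your proof is correct. The paper itself gives no argument for this lemma --- it is stated as a citation of \cite[Theorem 2.16]{IY 5} --- so your write-up supplies an actual proof where the paper only quotes one. Each step checks out: $fg=1_E$ with $e:=gf\neq 1_E$ does yield $E=gE\oplus(1_E-e)E\cong E\oplus N$ with $N\neq 0$; $N$ is cyclic (a quotient of $E=xR$), so Nakayama gives $N/NJ\neq 0$; and since $R/J$ is semisimple Artinian, $E/EJ$ has finite length, so the isomorphism of tops $E/EJ\cong E/EJ\oplus N/NJ$ is impossible. Hence $E$ is Dedekind-finite, and the result of \cite{IY 5} recalled in the introduction (equivalently, Proposition \ref{If E(R) is cyclic and Dedekind-finite, then R is injective} via Lemma \ref{Basic Lemma I}(5)) gives right self-injectivity; self-injective rings are semiregular, and semiregular plus semilocal is semiperfect. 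The route the present paper gestures at in the discussion following Lemma \ref{Basic Lemma I} is slightly different: a cyclic module over a semilocal ring has finite dual Goldie dimension, hence is generalized Hopfian, and one concludes via Lemma \ref{Basic Lemma I}(3) (small kernel) rather than via part (5) (Dedekind-finiteness). Your length-count on $E/EJ$ is the elementary core of that dual-Goldie-dimension argument, so the two proofs are close in spirit; yours has the advantage of being entirely self-contained modulo the one cited implication, while the paper's sketch leans on the general theory of dual Goldie dimension.
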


In \cite{JMS},
a ring $R$ is called right $q$-ring if every right ideal of $R$ is quasi-injective; equivalently if $R$ is right self-injective and essentially right duo. The next result follows directly from Lemma \ref{If R is semilocal and E(R) is cyclic, then R is semiperfect self-inj}, Proposition \ref{If R is semilocal and E(R/J(R)) is cyclic, then R has essential socle}, and \cite[Theorem 3.24 and Example 3.26]{NY}.

\begin{theorem}
\label{If R is semilocal ess rt duo with E(R) and E(R/J(R)) are cyclic, then R is rt PF-ring}If
a ring $R$ is semilocal, right hypersimple, and essentially right duo such that $E (R_{R})$ is cyclic, then $R$ is right pseudo-Frobenius. In particular, $R$ satisfies the following conditions:

\begin{enumerate}
\item $J (R) =Z (R_{R}) =Z (_{R} R)$. 

\item $s o c (R):=s o c (R_{R}) =s o c (_{R} R)\text{.}$ 

\item $s o c (R)  \subseteq ^{e s s}R_{R}$ and $s o c (R)  \subseteq ^{e s s}\text{}_{R} R\text{.}$ 

\item $s o c (e R)$ and $s o c (R e)$ are simple, $s o c (e R)  \subseteq ^{e s s}e R$ and $s o c (R e) \subseteq ^{e s s}\text{}R e$, for every local idempotent $e$ of $R$. 

\item $R$ has left and right finite Goldie dimension. 

\item Every simple right and simple left
$R$-module is embedded in $R$. 

\item $R$ is a right $q$-ring. 

\item $T =r_{R} l_{R} (T)$, for every right ideal $T$ of $R$. \end{enumerate}
\end{theorem}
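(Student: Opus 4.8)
The plan is to read the hypotheses as precisely the data needed to feed the two structural results already established above, and then to harvest the itemized conclusions from the known theory of right PF rings and right $q$-rings.

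First I would invoke Lemma~\ref{If R is semilocal and E(R) is cyclic, then R is semiperfect self-inj}: since $R$ is semilocal and $E(R_R)$ is cyclic, $R$ is semiperfect and right self-injective. Next, since $R$ is semilocal, right hypersimple, and essentially right duo, Proposition~\ref{If R is semilocal and E(R/J(R)) is cyclic, then R has essential socle} shows that the right socle of $R$ is essential in $R_R$. A semiperfect, right self-injective ring with essential right socle is right pseudo-Frobenius by the equivalent description of right PF rings recalled earlier, so the main assertion follows immediately.

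For the ``in particular'' list I would argue as follows. Since $R$ is right self-injective and essentially right duo, $R$ is by definition a right $q$-ring, which is item~(7). The remaining items~(1)--(6) and~(8) describe the detailed left-right behaviour of such a ring: the singular ideals, the socles and their essentiality, the Kasch property on both sides, finite Goldie dimension on both sides, and the annihilator identity $r_Rl_R(T)=T$ for every right ideal $T$. The right-handed halves of these ($J(R)=Z(R_R)$, right Kasch, $soc(eR)$ simple and essential in $eR$ for each local idempotent $e$, right finite Goldie dimension) are the standard features of a right PF ring and need no new input; the equality $Z(R_R)=Z({}_RR)$, the left-handed halves of (2)--(6), and~(8) are precisely what \cite[Theorem~3.24 and Example~3.26]{NY} record for a right PF ring that is additionally a right $q$-ring, and I would quote those results after checking that their hypotheses are satisfied by the $R$ produced in the first step.

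The step I expect to carry the real weight is this last appeal to \cite{NY}: the left-handed assertions and condition~(8) are genuinely non-formal and fail for a general right PF ring. They rely on the essentially-right-duo hypothesis, which upgrades $R$ to a right $q$-ring and thereby forces a strong left-right symmetry---morally, via the decomposition of a right $q$-ring into a semisimple Artinian factor and local factors of quasi-Frobenius type. So the care required is to make sure the cited results in \cite{NY} apply to our $R$ verbatim; the rest of the argument is just assembly of facts already in place.
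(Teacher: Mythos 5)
Your proof is correct and follows exactly the route the paper takes: the paper derives the theorem directly from Lemma~\ref{If R is semilocal and E(R) is cyclic, then R is semiperfect self-inj}, Proposition~\ref{If R is semilocal and E(R/J(R)) is cyclic, then R has essential socle}, and \cite[Theorem 3.24 and Example 3.26]{NY}, precisely as you propose. The only difference is that you spell out the assembly (semiperfect $+$ right self-injective $+$ essential right socle $\Rightarrow$ right PF, and right self-injective $+$ essentially right duo $\Rightarrow$ right $q$-ring) that the paper leaves implicit.
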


Observe that if $R$ is a semilocal ring and $E (R/J (R))$ is a cyclic module, then $E (R/J (R))  = \oplus _{i =1}^{n}E (S_{i})$, where $\left \{S_{i}\right \}_{i =1}^{n}$ is a set of simple right $R$-modules. Therefore $E (S_{i})$ is cyclic, $1 \leq i \leq n$. Now, if $S$ is any simple right $R$-module, then $S \cong S_{i}$, for some $i$, $1 \leq i \leq n$, and so $E (S)$ is cyclic. In particular, $R$ is a right hypersimple ring. Now, the next result is an immediate consequence of Theorem \ref{If R is semilocal ess rt duo with E(R) and E(R/J(R)) are cyclic, then R is rt PF-ring} above.

\begin{corollary}
If $R$ is a semilocal essentially right duo ring such that $E (R_{R})$ and $E (R/J (R))$ are cyclic modules, then $R$ is right pseudo-Frobenius that satisfies the conditions $(1)$ through $(8)$ of Theorem \ref{If R is semilocal ess rt duo with E(R) and E(R/J(R)) are cyclic, then R is rt PF-ring} above. 
\end{corollary}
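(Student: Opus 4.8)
The plan is to read this off directly from Theorem~\ref{If R is semilocal ess rt duo with E(R) and E(R/J(R)) are cyclic, then R is rt PF-ring}, whose hypotheses are that $R$ is semilocal, right hypersimple, essentially right duo, and that $E(R_R)$ is cyclic. Three of these four hypotheses are assumed outright in the present statement, so the entire task reduces to verifying that the extra assumption ``$E(R/J(R))$ is cyclic'' forces $R$ to be right hypersimple. This is precisely the observation recorded in the paragraph preceding the statement, and I would simply make it explicit.

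First I would invoke semilocality to write $R/J(R) \cong S_1^{k_1} \oplus \cdots \oplus S_n^{k_n}$, where $S_1,\dots,S_n$ are representatives of the (finitely many) isomorphism classes of simple right $R$-modules and each $k_i \geq 1$. Passing to injective hulls, a finite direct sum of injectives is injective and a finite direct sum of essential extensions is an essential extension, so $E(R/J(R)) \cong E(S_1)^{k_1} \oplus \cdots \oplus E(S_n)^{k_n}$; in particular each $E(S_i)$ is a direct summand of $E(R/J(R))$. Next I would use the elementary fact that a direct summand of a cyclic module is cyclic: if $M = A \oplus B$ with $M = mR$, then the projection $\pi\colon M \to A$ is onto, so $A = \pi(mR) = \pi(m)R$. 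Applying this to $E(R/J(R))$, which is cyclic by hypothesis, gives that each $E(S_i)$ is cyclic; since every simple right $R$-module is isomorphic to some $S_i$ and injective hulls of isomorphic modules are isomorphic, every simple right $R$-module has a cyclic injective hull, i.e. $R$ is right hypersimple.

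With right hypersimplicity established, all four hypotheses of Theorem~\ref{If R is semilocal ess rt duo with E(R) and E(R/J(R)) are cyclic, then R is rt PF-ring} are satisfied, so $R$ is right pseudo-Frobenius and conditions $(1)$ through $(8)$ listed there hold. I do not expect any genuine obstacle: the content is entirely a matter of unwinding definitions, and the only points deserving a word of care are the decomposition $E(R/J(R)) \cong \bigoplus_{i} E(S_i)^{k_i}$ and the fact that direct summands of cyclic modules are cyclic.
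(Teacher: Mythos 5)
Your proposal is correct and takes essentially the same route as the paper: the paragraph immediately preceding the corollary makes precisely your observation that $E(R/J(R))$ decomposes as a finite direct sum of injective hulls of simple modules, each of which is then cyclic as a direct summand of a cyclic module, so that $R$ is right hypersimple and Theorem \ref{If R is semilocal ess rt duo with E(R) and E(R/J(R)) are cyclic, then R is rt PF-ring} applies. You merely make explicit the standard facts (summands of cyclics are cyclic; every simple right module over a semilocal ring occurs in $R/J(R)$) that the paper leaves implicit.
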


\section{Rings with Cyclic Injective Hulls}
As pointed out in the introduction there are no known examples of hypercyclic rings that are not self-injective. Moreover, we also don't know
of an example of a non self-injective ring $R$ whose injective hull $E (R_{R})$\ is cyclic. In general, it is not easy to construct $E (R_{R})$ if the ring $R$ is not Artinian. This means that the existence of a non self-injective ring $R$ whose injective hull $E (R_{R})$\ is cyclic will be of an elusive type of a ring. 

In this section we
continue the investigation, that was carried out in \cite{IY 5}, of the
rings $R$ whose injective hull $E (R_{R})$ is cyclic. We will extend and unify some of the results in \cite{IY 5}
and obtain new ones.

\begin{lemma}
\label{Basic Lemma I} Denote by $E$ an injective hull of $R_{R}$ with embedding $\epsilon  :R \rightarrow E$. Suppose that $E$ is cyclic. Then there exist a surjective homomorphism $\pi  :R \rightarrow E$, and endomorphisms $f :R \rightarrow R$ and $g :E \rightarrow E$ of right $R$-modules such that $\epsilon  =\pi  \circ f$ and $\pi  =g \circ \epsilon $ hold, i.e. the following diagrams commute:
\begin{equation*}
%TCIMACRO{\TeXButton{\xymatrix }{\xymatrix { & &R\ar @{->}[d]^\epsilon \ar @{-->}[lld]_f\\ R \ar @{->>}[rr]_{\pi } && E }}}%
%BeginExpansion
\xymatrix { & &R\ar @{->}[d]^\epsilon \ar @{-->}[lld]_f\\ R \ar @{->>}[rr]_{\pi } && E }
%EndExpansion
\text{\quad \quad \quad \quad }
%TCIMACRO{\TeXButton{\xymatrix }{\xymatrix { R \ar @{->}[rr]^{\epsilon }\ar @{->>}[d]_\pi && E \ar @{-->}[lld]^{g}\\ E }}}%
%BeginExpansion
\xymatrix { R \ar @{->}[rr]^{\epsilon }\ar @{->>}[d]_\pi && E \ar @{-->}[lld]^{g}\\ E }
%EndExpansion
\end{equation*}Furthermore, the following properties hold:

\begin{enumerate}
\item $f$ is injective and if $\mathrm{I} \mathrm{m} (f)$ is essential in $R$, then $R \simeq E$. 

\item $\mathrm{K} \mathrm{e} \mathrm{r} (\pi ) \oplus \mathrm{I} \mathrm{m} (f)$ is an essential right ideal of $R$. 

\item $g$ is surjective and if $\mathrm{K} \mathrm{e} \mathrm{r} (g)$ is small in $E$, then $R \simeq E$. 

\item $\mathrm{I} \mathrm{m} (\epsilon ) +\mathrm{K} \mathrm{e} \mathrm{r} (g) =E$ and $\mathrm{I} \mathrm{m} (\epsilon ) \cap \mathrm{K} \mathrm{e} \mathrm{r} (g) =\epsilon  (\mathrm{K} \mathrm{e} \mathrm{r} (\pi ))$. 

\item $E \simeq E (\mathrm{K} \mathrm{e} \mathrm{r} (\pi )) \oplus E$ and $E/\epsilon  (\mathrm{K} \mathrm{e} \mathrm{r} (\pi )) \simeq E/\epsilon  (R) \oplus E$. \end{enumerate}
\end{lemma}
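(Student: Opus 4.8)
The plan is to produce the three maps directly from the cyclicity of $E$ together with the projectivity of $R_R$ and the injectivity of $E$, and then to extract the five assertions by elementary diagram chasing. First I would pick a generator of the cyclic module $E$; sending $1$ to it gives an epimorphism $\pi\colon R\to E$. Since $R_R$ is projective and $\pi$ is onto, the monomorphism $\epsilon$ lifts through $\pi$, yielding $f\colon R\to R$ with $\pi\circ f=\epsilon$. Dually, since $E$ is injective and $\epsilon$ is a monomorphism, the homomorphism $\pi$ extends along $\epsilon$ to some $g\colon E\to E$ with $g\circ\epsilon=\pi$. This gives the commutativity of both triangles.

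Properties (1) and (2) then follow from the single identity $\pi\circ f=\epsilon$: it forces $f$ to be injective, and forces $\pi$ restricted to $\operatorname{Im}(f)$ to be injective (as it equals $\epsilon\circ f^{-1}$ on $\operatorname{Im}(f)$), whence $\operatorname{Ker}(\pi)\cap\operatorname{Im}(f)=0$, which is the directness needed in (2). If in addition $\operatorname{Im}(f)$ is essential in $R$, this intersection being zero forces $\operatorname{Ker}(\pi)=0$, so $\pi$ is an isomorphism and $R\simeq E$, proving (1). For the essentiality in (2), I would take $0\neq r\in R$: if $\pi(r)=0$ then $r\in\operatorname{Ker}(\pi)$; otherwise, using that $\epsilon(R)$ is essential in $E$, choose $s$ with $0\neq\pi(rs)\in\epsilon(R)$, write $\pi(rs)=\epsilon(t)=\pi(f(t))$, and conclude $rs=(rs-f(t))+f(t)\in\operatorname{Ker}(\pi)\oplus\operatorname{Im}(f)$ with $rs\neq 0$.

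Properties (3) and (4) come from $g\circ\epsilon=\pi$: surjectivity of $\pi$ gives surjectivity of $g$ and also $g(\operatorname{Im}(\epsilon))=E$, so every $y\in E$ differs from some $\epsilon(r)$ by an element of $\operatorname{Ker}(g)$, giving $\operatorname{Im}(\epsilon)+\operatorname{Ker}(g)=E$; and $\epsilon(r)\in\operatorname{Ker}(g)$ iff $\pi(r)=0$ gives $\operatorname{Im}(\epsilon)\cap\operatorname{Ker}(g)=\epsilon(\operatorname{Ker}(\pi))$, which is (4). If moreover $\operatorname{Ker}(g)$ is small in $E$, then $\operatorname{Im}(\epsilon)+\operatorname{Ker}(g)=E$ forces $\operatorname{Im}(\epsilon)=E$, so $\epsilon$ is an isomorphism and $R\simeq E$, proving (3).

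Finally, for (5) I would apply $E(-)$ to the essential submodule $\operatorname{Ker}(\pi)\oplus\operatorname{Im}(f)$ of $R$ (hence of $E$) from (2), using that an injective hull of a direct sum splits as the corresponding direct sum of injective hulls and that $\operatorname{Im}(f)\cong R$ has injective hull $E$; this yields $E\simeq E(\operatorname{Ker}(\pi))\oplus E$. For the second isomorphism, put $A=\operatorname{Im}(\epsilon)$, $B=\operatorname{Ker}(g)$, and $C=A\cap B=\epsilon(\operatorname{Ker}(\pi))$; since $A+B=E$, the canonical map $A/C\oplus B/C\to E/C$ is an isomorphism, and one identifies $A/C\cong R/\operatorname{Ker}(\pi)\cong E$ via $\pi$ and $B/C\cong(A+B)/A=E/\epsilon(R)$ by the second isomorphism theorem, giving $E/\epsilon(\operatorname{Ker}(\pi))\simeq E\oplus E/\epsilon(R)$. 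I expect the only genuinely fiddly points to be the essentiality argument in (2) and keeping the bookkeeping straight in the decomposition $E/C\cong A/C\oplus B/C$; everything else is formal.
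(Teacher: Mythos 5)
Your proposal is correct and follows essentially the same route as the paper: $\pi$ from cyclicity, $f$ by projectivity of $R_R$, $g$ by injectivity of $E$, with (1)--(4) by the same diagram chases and (5) via $E(\mathrm{Ker}(\pi)\oplus\mathrm{Im}(f))$ and the decomposition $(A+B)/(A\cap B)\simeq A/(A\cap B)\oplus B/(A\cap B)$. The only cosmetic difference is that you prove the essentiality in (2) by a direct element argument rather than by identifying $\mathrm{Ker}(\pi)\oplus\mathrm{Im}(f)=\pi^{-1}(\mathrm{Im}(\epsilon))$ and citing that preimages of essential submodules are essential, which amounts to the same thing.
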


\begin{proof}
(1) Since $\epsilon  =\pi  \circ f$ is injective, also $f$ is injective. Furthermore, if $\mathrm{I} \mathrm{m} (f)$ is essential, then $\pi $ is injective and hence an isomorphism. 

(2) From $\epsilon  =\pi  \circ f$ and $f$ being injective, we get $\mathrm{K} \mathrm{e} \mathrm{r} (\pi ) \cap \mathrm{I} \mathrm{m} (f) =\mathrm{K} \mathrm{e} \mathrm{r} (\epsilon ) =0$. Furthermore, $\mathrm{K} \mathrm{e} \mathrm{r} (\pi ) \oplus \mathrm{I} \mathrm{m} (f) =\pi ^{ -1} (\mathrm{I} \mathrm{m} (\epsilon ))$ holds, because if $r \in \pi ^{ -1}(\mathrm{I} \mathrm{m} (\epsilon )$, then $\pi  (r) =\epsilon  (r^{ \prime }) =\pi  (f (r^{ \prime }))$ for some $r^{ \prime } \in R$. Therefore $r -f (r^{ \prime }) \in \mathrm{K} \mathrm{e} \mathrm{r} (\pi )$ and $r =r -f (r^{ \prime }) +f (r^{ \prime }) \in \mathrm{K} \mathrm{e} \mathrm{r} (\pi ) \oplus \mathrm{I} \mathrm{m} (f)$. Conversely, if $r \in \mathrm{K} \mathrm{e} \mathrm{r} (\pi ) \oplus \mathrm{I} \mathrm{m} (f)$, then $r =r^{ \prime } +f (r^{ \prime  \prime })$ for some $r^{ \prime } \in \mathrm{K} \mathrm{e} \mathrm{r} (\pi )$ and $r^{ \prime  \prime } \in R$. Hence $\pi  (r) =\pi  (f (r^{ \prime  \prime })) =\epsilon  (r^{ \prime  \prime })$, i.e. $r \in \pi ^{ -1} \left (\mathrm{I} \mathrm{m} (\epsilon )\right )$. 

Since $\mathrm{I} \mathrm{m} (\epsilon )$ is essential in $E$ and since pre-images of essential submodules are essential we conclude that $\mathrm{K} \mathrm{e} \mathrm{r} (\epsilon ) \oplus \mathrm{I} \mathrm{m} (f)$ is essential in $R$. 

(3) and (4) are dual to (1) and (2) with the exception that $\mathrm{K} \mathrm{e} \mathrm{r} (\pi )$ is not necessarily small: (3) Since $\pi  =g \circ \epsilon $ is surjective, also $g$ is injective. Furthermore, if $\mathrm{K} \mathrm{e} \mathrm{r} (g)$ is small, then $\epsilon $ is surjective and hence an isomorphism. 

(4) From $\pi  =g \circ \epsilon $ and $g$ being surjective, we get $\mathrm{I} \mathrm{m} (\epsilon ) +\mathrm{K} \mathrm{e} \mathrm{r} (g) =\mathrm{I} \mathrm{m} (\pi ) =E$, because for any $e \in E$ we have $g (e) =\pi  (r)$ for some $r \in R$. Hence $e -\epsilon  (r) \in \mathrm{K} \mathrm{e} \mathrm{r} (g)$ and $e =\epsilon  (r) +(e -\epsilon  (r)) \in \mathrm{I} \mathrm{m} (\epsilon ) +\mathrm{K} \mathrm{e} \mathrm{r} (g)$. Furthermore, $\mathrm{I} \mathrm{m} (\epsilon ) \cap \mathrm{K} \mathrm{e} \mathrm{r} (g) =\epsilon  (\mathrm{K} \mathrm{e} \mathrm{r} (\pi ))$ holds, because if $r \in \epsilon  (\mathrm{K} \mathrm{e} \mathrm{r} (\pi ))$, then $r =\epsilon  (r^{ \prime })$ for some $r^{ \prime } \in \mathrm{K} \mathrm{e} \mathrm{r} (\pi )$. Hence $g (r) =g (\epsilon  (r^{ \prime })) =\pi  (r^{ \prime }) =0$ shows $r \in \mathrm{I} \mathrm{m} (\epsilon ) \cap \mathrm{K} \mathrm{e} \mathrm{r} (g)$. On the other hand, if $r =\epsilon  (r^{ \prime }) \in \mathrm{I} \mathrm{m} (\epsilon ) \cap \mathrm{K} \mathrm{e} \mathrm{r} (g)$, then $\pi  (r^{ \prime }) =g (r) =0$, i.e. $r \in \epsilon  (\mathrm{K} \mathrm{e} \mathrm{r} (\pi ))$. 

The first part of (5) follows from (2), because $\mathrm{K} \mathrm{e} \mathrm{r} (\pi ) \oplus \mathrm{I} \mathrm{m} (f)$ is essential in $R$:
\begin{equation*}E =E (R_{R}) =E (\mathrm{K} \mathrm{e} \mathrm{r} (\pi )) \oplus E (\mathrm{I} \mathrm{m} (f)) \simeq E (\mathrm{K} \mathrm{e} \mathrm{r} (\pi )) \oplus E
\end{equation*}as $\mathrm{I} \mathrm{m} (f) \simeq R$. The second part of (5) follows from (4), because
\begin{eqnarray*}E/\epsilon  (\mathrm{K} \mathrm{e} \mathrm{r} (\pi )) &\simeq& \mathrm{I} \mathrm{m} (\epsilon )/\left (\mathrm{I} \mathrm{m} (\epsilon ) \cap \mathrm{K} \mathrm{e} \mathrm{r} (g)\right ) \oplus \mathrm{K} \mathrm{e} \mathrm{r} (g)/\left (\mathrm{I} \mathrm{m} (\epsilon ) \cap \mathrm{K} \mathrm{e} \mathrm{r} (g)\right ) \\
 &  \simeq  & E/\mathrm{K} \mathrm{e} \mathrm{r} (g) \oplus E/\mathrm{I} \mathrm{m} (\epsilon ) \\
 &  \simeq  & \mathrm{I} \mathrm{m} (g) \oplus E/\mathrm{I} \mathrm{m} (\epsilon ) \simeq E \oplus E/\epsilon  (R)\text{.}\end{eqnarray*}
\end{proof}

Under the assumptions that $E =E (R_{R})$ is cyclic with surjective homomorphism $\pi  :R \rightarrow E$, Lemma \ref{Basic Lemma I}(5) shows that if $\pi $ is not injective, then $E$ is isomorphic to a proper direct summand of itself. A right $R$-module $M$ is called  {Dedekind-finite} if $M$ is not isomorphic to a proper direct summand of itself. Hence if $E$ is cyclic and Dedekind-finite, then Lemma \ref{Basic Lemma I}(5) implies that $\pi $ is an isomorphism and hence $R \simeq E$ is self-injective. We reprove in this way \cite[Proposition 2.9]{IY 5}.
It is well-known that a ring $R$ is Dedekind-finite as right (resp. left) $R$-module if and only if it is  {directly-finite}, which means that whenever $a b =1$, then $b a =1$, for any $a ,b \in R$. Dedekind-finite modules include Hopfian and co-Hopfian modules. Recall that a right $R$-module $M$ is  {Hopfian} resp.  {generalized Hopfian} (see \cite{GH}),
if any epimorphism of $M$ is an isomorphism resp. has small kernel; while $M$ is called  {co-Hopfian} resp.  {weakly co-Hopfian} (see \cite{HV})
if every monomorphism is an isomorphism resp. has essential image. Examples of generalized Hopfian modules include Noetherian modules and modules with finite dual Goldie dimension, e.g. Artinian modules and finitely generated modules over semilocal rings. In particular, if $R_{R}$ has a cyclic injective hull $E$ such that $E$ is generalized Hopfian (e.g. if $R$ is semilocal), then $R$ is right self-injective by Lemma \ref{Basic Lemma I}(3). 

Examples
of weakly co-Hopfian modules include modules with finite Goldie dimension as well as cyclic modules over commutative rings. Moreover, every square-free
module $M$ is weakly co-Hopfian, where a module $M$ is called  {square-free} \textrm{(}$S F$-module for short\textrm{)} if it contains no non-zero isomorphic submodules $A$ and $B$ with $A \cap B =0$. By \cite[Proposition 1.4]{HV},
the notions Dedekind-finite, co-Hopfian and weakly co-Hopfian coincide for quasi-injective modules. 

We will now compile a list of
sufficient conditions that force a ring to be self-injective under the condition that its injective hull is cyclic. First of all we recover \cite[Propositions 2.4 and 2.9]{IY 5}
in the following Proposition.

\begin{proposition}
\label{If E(R) is cyclic and R is weakly co-Hopf, then R is injective}
\label{If E(R) is cyclic and quasi Dedekind-finite, then R is co-Hopfian}
\label{E(R) cyclic implies Injective IV} \label{E(R) cyclic implies Injective III}
\label{If E(R) is cyclic and Dedekind-finite, then R is injective}
Let $E =E (R_{R})$ be an injective hull of $R$. Then the following conditions are equivalent:

\begin{enumerate}
\item [(a)] $E$ is cyclic and Dedekind-finite. 

\item [(b)] $E$ is cyclic and $R_{R}$ is weakly co-Hopfian. 

\item [(c)] $E$ is cyclic and, whenever $E =x R$, $r_{R} (x) =0$. 

\item [(d)] $R$ is right self-injective and directly finite. \end{enumerate}
\end{proposition}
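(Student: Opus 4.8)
The plan is to prove that each of the four conditions separately forces $R$ to be right self-injective, i.e. $R\simeq E$, and then to show that once this is known all four statements reduce to the single assertion that $R_R$ is Dedekind-finite.

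First I would harvest self-injectivity. Assuming (a), Lemma \ref{Basic Lemma I}(5) gives $E\simeq E(\mathrm{Ker}(\pi))\oplus E$; Dedekind-finiteness of $E$ then forces $E(\mathrm{Ker}(\pi))=0$, hence $\mathrm{Ker}(\pi)=0$ and $\pi$ is an isomorphism --- this is precisely the observation already made in the remark preceding the statement. Assuming (b), the map $f\colon R\to R$ produced by Lemma \ref{Basic Lemma I} is a monomorphism of $R_R$, so weak co-Hopfianness makes $\mathrm{Im}(f)$ essential in $R$, and Lemma \ref{Basic Lemma I}(1) gives $R\simeq E$. Assuming (c), if $E=xR$ then the evaluation map $r\mapsto xr$ is a surjection $R\to E$ with kernel $r_R(x)=0$, hence an isomorphism, and again $R\simeq E$. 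Condition (d) contains self-injectivity by definition. In all four cases $R_R\simeq E$ is injective, in particular quasi-injective.

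Next, working under the assumption that $R$ is right self-injective, so that $R_R\simeq E$ is quasi-injective, I would invoke \cite[Proposition 1.4]{HV} to get that the notions Dedekind-finite, co-Hopfian and weakly co-Hopfian coincide for $R_R$, and combine this with the standard fact --- recalled in the text --- that $R_R$ is Dedekind-finite if and only if $R$ is directly finite. Since Dedekind-finiteness passes across the isomorphism $R_R\simeq E$, this already yields $(a)\Leftrightarrow(b)\Leftrightarrow(d)$ (each of (a), (b) carries the cyclicity of $E$, and (d) implies it). To bring in (c), I would fix a generator $x_0$ of $E$ with $r_R(x_0)=0$ --- available once $R\simeq E$ --- and transport along the induced isomorphism $\pi_{x_0}\colon R_R\to E$: the module generators of $E$ are exactly the images of elements $a\in R$ with $aR=R$, i.e. of the right-invertible elements, and $r_R(\pi_{x_0}(a))=r_R(a)$ since $\pi_{x_0}$ is injective. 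Thus (c) says exactly that every right-invertible element of $R$ has zero right annihilator. A one-line check (if $ab=1$ and $r_R(a)=0$ then $a(ba-1)=0$, so $ba=1$; the converse is immediate) identifies this with $R$ being directly finite, hence with (d). Assembling the pieces gives $(a)\Leftrightarrow(b)\Leftrightarrow(c)\Leftrightarrow(d)$.

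The main obstacle will be the treatment of (c): one must first squeeze self-injectivity out of a \emph{single} cyclic generator of $E$ with trivial right annihilator, and then, after identifying $R$ with $E$, recognize the condition on \emph{all} generators as nothing more than the implication ``right-invertible $\Rightarrow$ left-invertible''. The remaining implications are routine once Lemma \ref{Basic Lemma I} and the quasi-injective coincidence of finiteness conditions from \cite{HV} are in hand.
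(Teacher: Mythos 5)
Your proposal is correct and uses essentially the same ingredients as the paper's proof: Lemma \ref{Basic Lemma I} (parts (1) and (5)) to extract self-injectivity, the coincidence of Dedekind-finite/co-Hopfian/weakly co-Hopfian for quasi-injective modules from \cite{HV}, and the elementary computation $a(ba-1)=0$ identifying condition (c) with direct finiteness. The only difference is organizational --- you route every condition through self-injectivity as a hub rather than proving a cycle of implications --- which changes nothing of substance.
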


\begin{proof}
$(a) \Rightarrow (b)$. By \cite[Corollary 1.5]{HV}, $R_{R}$ is weakly co-Hopfian.

$(b) \Rightarrow (d)$. The implication follows by \cite[Proposition 2.4]{IY 5} or also by Lemma \ref{Basic Lemma I}(1).

$(d) \Rightarrow (a)$. The implication follows trivially for $R \simeq E$.

$(d) \Rightarrow (c)$. Clearly, if $R$ is right self-injective, then there exists an isomorphism $\pi  :R \rightarrow E$ of right $R$-modules. If $E =x R$, then $R =x^{ \prime } R$, for $x^{ \prime} = \pi^{ -1} (x)$. So $1 =x^{ \prime } y$ for some $y \in R$. As $R$ is directly finite, $y x^{ \prime } =1$. So, $x^{ \prime } \in U (R)$ and hence $r_{R} (x) =r_{R} (x^{ \prime }) =0$. 

$(c) \Rightarrow (d)$. Clearly, $E =x R \simeq R/r_{R} (x) \simeq R$ shows $R$ is right self-injective. If $x y =1$ in $R$, then $x R =R =E (R_{R})$, so $r_{R} (x) =0$. As $x (1 -y x) =0$, it follows that $y x =1$. So $R$ is directly finite. 
\end{proof}

A well-known example of a right self-injective ring that is not directly finite is
the (full) ring of linear transformations of an infinite dimensional vector space over a division ring, with linear transformations acting on the left of
vectors (see e.g. \cite{O}). The above proposition leads to the following
question:

\begin{problem}
\label{Problem 1} Is $R$ right self-injective if $E (R_{R})$ is cyclic and $R$ is directly finite? 
\end{problem}

\begin{example}
\label{M is DF but E(M) is not DF}Observe that if $E (M)$ is Dedekind-finite then $M$ is Dedekind-finite but the converse in general need not be true. For, if $\mathbb{Z}_{n} =\mathbb{Z}/n \mathbb{Z}$, $p$ is a prime number and $\mathbb{Z}_{p^{\infty }}$ is the Pr{\"u}fer $p$-group, then the $\mathbb{Z}$-module $M = \oplus _{n \geq 1}\mathbb{Z}_{p^{n}}$ is Dedekind-finite, but its injective hull $E (M) = \oplus _{n \geq 1}\mathbb{Z}_{p^{\infty }}$ is not a Dedekind-finite $\mathbb{Z}$-module. Moreover, $M$ has a homomorphic image which is a direct sum of countably infinite copies of $\mathbb{Z}_p$ and which is not Dedekind-finite. 
\end{example}

Recall first, a right $R$-module $M$ is called generalized $C 2$-module ($G C 2$-module for short) if, any submodule $N$ of $M$ that is isomorphic to $M$, is a direct summand of $M$. A ring $R$ is called a right $G C 2$-ring if $R_{R}$ is a $G C 2$-module and it is easy to see that a module $M$ is co-Hopfian if and only if $M$ is Dedekind-finite and $G C 2$. Hence we have a partial positive answer to Problem \ref{Problem 1} above, namely by Proposition
\ref{If E(R) is cyclic and R is weakly co-Hopf, then R is injective}:
if $R$ is directly finite and $E (R_{R})$ is cyclic, then $R$ is right self-injective if and only if $R$ is a right $G C 2$-ring. 

\medskip A module $M$ is called  {strongly Dedekind-finite} if every homomorphic image of $M$ is Dedekind-finite. 
The module $M$ in Example \ref{M is DF but E(M) is not DF} is a Dedekind-finite $\mathbb{Z}$-module that is not strongly Dedekind-finite. 

\begin{proposition}
\label{E(R) cyclic implies Injective V} Suppose $E (R_{R})$ is cyclic. If $R_{R}$ is strongly Dedekind-finite, then $R$ is right self-injective. Moreover, in this case $R/J (R)$ is unit-regular. 
\end{proposition}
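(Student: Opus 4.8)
The plan is to use the basic lemma together with the fact that $E=E(R_R)$ is cyclic in order to show that the surjection $\pi\colon R\to E$ from Lemma \ref{Basic Lemma I} must be an isomorphism, so that $R\simeq E$ is right self-injective. By Lemma \ref{Basic Lemma I}(5) we have $E/\epsilon(\mathrm{Ker}(\pi))\simeq E\oplus E/\epsilon(R)$. Now $E/\epsilon(R)$ is an epimorphic image of $E$, hence an epimorphic image of $R_R$ (via $\pi$), and similarly $E/\epsilon(\mathrm{Ker}(\pi))$ is an epimorphic image of $R_R$; so by the strong Dedekind-finiteness hypothesis both of these modules are Dedekind-finite. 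From $E/\epsilon(\mathrm{Ker}(\pi))\simeq E\oplus E/\epsilon(R)$ and Dedekind-finiteness of the left side, the summand $E$ on the right is Dedekind-finite as well. But $E$ is a homomorphic image of $R_R$, so $E$ is strongly Dedekind-finite; in particular $E$ is Dedekind-finite and cyclic, and Proposition \ref{If E(R) is cyclic and Dedekind-finite, then R is injective} (the equivalence $(a)\Leftrightarrow(d)$) gives that $R$ is right self-injective (and directly finite).

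Alternatively, and perhaps more cleanly, I would argue directly: since $E$ is cyclic it is an epimorphic image of $R_R$, hence by hypothesis $E$ is Dedekind-finite; then condition (a) of Proposition \ref{If E(R) is cyclic and Dedekind-finite, then R is injective} holds and $R$ is right self-injective. This collapses the argument to a single line once one observes that ``cyclic'' means ``epimorphic image of $R_R$''. The first route above is only needed if one wants to emphasize the role of part (5) of the basic lemma, but the short route is the natural one and I would adopt it.

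For the moreover statement, once we know $R$ is right self-injective, we know $R$ is a right self-injective ring whose $R_R$ is strongly Dedekind-finite; in particular every cyclic right $R$-module is Dedekind-finite. The ring $\overline{R}=R/J(R)$ is von Neumann regular and right self-injective (since $R$ is right self-injective, $R/J(R)$ is regular and right self-injective by the classical theory, or one can observe $J(R)=Z(R_R)$ and pass to the regular quotient). To conclude that $\overline{R}$ is unit-regular it suffices, by the standard characterization of unit-regular regular rings, to show $\overline{R}$ is directly finite: if $ab=1$ in $\overline{R}$ with $a=\bar x$, $b=\bar y$, lift to $R$ so that $xy-1\in J(R)$, whence $xy\in U(R)$; replacing $x$ by $(xy)^{-1}x$ we may assume $xy=1$ in $R$, and then $xR\oplus(1-yx)R$ with $xR\simeq R$ forces, upon passing to homomorphic images and using that $R_R$ is Dedekind-finite (indeed strongly so), $yx=1$. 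Hence $\overline{R}$ is directly finite, and a directly finite (von Neumann) regular ring is unit-regular.

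The main obstacle is essentially bookkeeping rather than a deep point: one must be careful that ``strongly Dedekind-finite'' is being applied to genuine homomorphic images of $R_R$ (which both $E$ and its relevant quotients are, because $E$ is cyclic), and in the moreover part one must correctly move the direct-finiteness of $R$ down to $R/J(R)$ and then invoke the theorem that a directly finite regular ring is unit-regular. Neither step is hard, but the argument should make explicit why Dedekind-finiteness of $E$ is automatic here — that is exactly the content of cyclicity plus the hypothesis — since that is the crux that makes Proposition \ref{If E(R) is cyclic and Dedekind-finite, then R is injective} applicable.
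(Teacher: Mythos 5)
Your ``short route'' is exactly the paper's proof: since $E(R_R)$ is cyclic it is a homomorphic image of $R_R$, hence Dedekind-finite by hypothesis, and Proposition \ref{If E(R) is cyclic and Dedekind-finite, then R is injective} finishes the argument; the detour through Lemma \ref{Basic Lemma I}(5) is unnecessary, as you note. One caution on the ``moreover'' part: your closing sentence, that a directly finite von Neumann regular ring is unit-regular, is false in general (Bergman's example in Goodearl's book); what is true, and what your argument actually needs, is that a \emph{right self-injective} regular ring is unit-regular if and only if it is directly finite, and since you have already established that $R/J(R)$ is regular and right self-injective, your verification of direct finiteness of $R/J(R)$ does complete the proof once the correct theorem is cited.
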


\begin{proof}
If $R_{R}$ is strongly Dedekind-finite and $E$ is cyclic, then $E$ is Dedekind-finite and the claim follows again by Proposition \ref{If E(R) is cyclic and R is weakly co-Hopf, then R is injective}. 
\end{proof}

A module $M$ is called  {dual-square-free} \textrm{(}$D S F$-module for short\textrm{)} if $M$ has no proper submodules $A$ and $B$ with $M =A +B$ and $M/A \cong M/B$. A ring $R$ is called right $D S F$-ring, if $R$ as a right $R$-module is a $D S F$-module. By \cite[Proposition 2.3 and Lemma
3.1]{IY 2}, $D S F$-modules are strongly Dedekind-finite. Moreover, it was shown in \cite[Corollary 2.17]{IY 2}
that a ring $R$ is right $D S F$ if and only if $R$ is  {right quasi-duo}, i.e. every maximal right ideal of $R$ is two-sided. Moreover, from \cite{Y} we have that
$R$ is right quasi-duo if and only if $R/J (R)$ is a right quasi-duo ring, if and only if $R/J (R)$ is an Abelian right quasi-duo ring.

\begin{corollary}[{compare with \cite[Proposition 3.8]{IY 5}}]
\label{If E(R) is cyclic and DSF then R is self-inj}
\label{E(R) cyclic implies Injective VI} The following conditions are equivalent
for a ring $R$:

\begin{enumerate}
\item [(a)] $E (R_{R})$ is cyclic and $R$ is right quasi-duo. 

\item [(b)] $E (R_{R})$ is cyclic and $R/J (R)$ is strongly regular. 

\item [(c)] $R$ is right quasi-duo right self-injective. \end{enumerate}
\end{corollary}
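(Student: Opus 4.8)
The plan is to prove the cycle $(a)\Rightarrow(c)\Rightarrow(b)\Rightarrow(a)$, the whole thing being essentially an assembly of Proposition~\ref{E(R) cyclic implies Injective V} with the characterizations of right quasi-duo rings recalled just before the statement (from \cite{IY 2} and \cite{Y}). First I would treat $(a)\Rightarrow(c)$: assuming $E(R_R)$ cyclic and $R$ right quasi-duo, \cite[Corollary 2.17]{IY 2} says $R$ is a right $DSF$-ring, and then \cite[Proposition 2.3 and Lemma 3.1]{IY 2} gives that $R_R$ is strongly Dedekind-finite. Now Proposition~\ref{E(R) cyclic implies Injective V} applies verbatim and yields that $R$ is right self-injective; together with the hypothesis that $R$ is right quasi-duo this is precisely $(c)$.

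Next, for $(c)\Rightarrow(b)$: assume $R$ is right quasi-duo and right self-injective, so that $E(R_R)=R$ is trivially cyclic and it only remains to see that $R/J(R)$ is strongly regular. Since $R$ is right quasi-duo, the same $DSF$/strongly-Dedekind-finite chain from \cite{IY 2} applies, and $E(R_R)$ is cyclic, so Proposition~\ref{E(R) cyclic implies Injective V} gives that $R/J(R)$ is unit-regular, in particular von Neumann regular. On the other hand, right quasi-duo forces $R/J(R)$ to be Abelian (part of the characterization from \cite{Y} recalled above), and an Abelian von Neumann regular ring is strongly regular; hence $R/J(R)$ is strongly regular. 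Finally, for $(b)\Rightarrow(a)$: if $R/J(R)$ is strongly regular, then it is reduced and von Neumann regular, hence duo and in particular right quasi-duo; by \cite{Y}, $R$ is right quasi-duo if and only if $R/J(R)$ is, so $R$ is right quasi-duo, and $(a)$ follows since $E(R_R)$ cyclic is already assumed in $(b)$.

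I do not expect a genuine obstacle here, as every step is either a citation or a standard fact. The two places that need a moment of care are: (i) verifying that the hypotheses of Proposition~\ref{E(R) cyclic implies Injective V} are met in $(a)\Rightarrow(c)$ and $(c)\Rightarrow(b)$, i.e. that right quasi-duo genuinely yields strong Dedekind-finiteness of $R_R$ via \cite{IY 2}; and (ii) the two elementary ring-theoretic implications ``Abelian $+$ von Neumann regular $\Rightarrow$ strongly regular'' and ``strongly regular $\Rightarrow$ right quasi-duo'' (the latter because in a strongly regular ring every principal one-sided ideal is generated by a central idempotent, so the ring is duo). If one prefers to avoid invoking Proposition~\ref{E(R) cyclic implies Injective V} a second time in $(c)\Rightarrow(b)$, one may instead use the classical fact that $R/J(R)$ is von Neumann regular for any right self-injective $R$; but routing through the proposition keeps the proof self-contained within the paper.
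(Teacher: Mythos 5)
Your proof is correct and follows the same route as the paper: right quasi-duo implies $R_R$ is a $DSF$-module and hence strongly Dedekind-finite (via \cite{IY 2}), so Proposition~\ref{E(R) cyclic implies Injective V} applied to the cyclic injective hull gives right self-injectivity. You are in fact more complete than the paper's own proof, which only records $(a)\Leftrightarrow(c)$ and leaves the passage to $(b)$ (unit-regular plus Abelian gives strongly regular; strongly regular gives duo, hence quasi-duo, and quasi-duo descends from $R/J(R)$ to $R$ by \cite{Y}) unstated.
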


\begin{proof}
Since a right quasi-duo ring is strongly Dedekind-finite, the equivalence $(a) \Leftrightarrow (c)$ follows from Proposition \ref{If E(R) is cyclic and Dedekind-finite, then R is injective}.
\end{proof}

A right $R$-module $M$ is called  {distributive} if $A \cap (B +C) =(A \cap B) +(A \cap C)$ for all submodules $A$, $B$, and $C$ of $M$. It was shown in \cite{S} that a module $M$ is distributive if and only if every quotient of $M$ is square-free and, in \cite[Lemma 2.2]{IY 6},
$M$ is distributive if and only if every submodule of $M$ is $D S F$. Hence from Corollary \ref{If E(R) is cyclic and DSF then R is self-inj}
it follows that if $R$ is a right distributive ring and $E (R_{R})$ is cyclic, then $R$ is right self-injective. 

\medskip The next proposition provides conditions under
which a ring is right weakly co-Hopfian. Hence those conditions are sufficient for $R$ to be self-injective in case it has a cyclic injective hull. A ring $R$ is called a  {$U N$-ring} \cite{GC} if, every non-unit of $R$ is a product of a unit and a nilpotent. An element $a$ in a ring $R$ is called right  {morphic} \cite{NS} if $R/a R \cong r_{R} (a)$. An element $a$ in a ring $R$ is (von Neumann)  {regular} (resp.  {unit regular}) if $a \in a R a$ resp. $a \in a U (R) a$. By \cite[Example 4 and Proposition 5]{NS},
an element in a ring is unit regular if and only if it is both regular and right morphic. Recall that a ring $R$ is  {strongly $\pi $-regular} if and only if, for each $a \in R$, there exists $n \geq 1$ such that $a^{n}$ is the product of a unit and an idempotent that commute. Say that a non-zero element $a \in R$ is a left  {non-zero divisor}, if $r_{R} (a) =0$, i.e. if $a b =0$ implies $b =0$. Note that if $a$ is a left non-zero divisor and has a right inverse, say $b$, then $1 =a b$ implies that $a (1 -b a) =0$. But then $1 =b a$ shows that $a \in U (R)$ is invertible.

\begin{proposition}
\label{E(R) cyclic implies Injective II} A ring
$R$ is right weakly co-Hopfian if it satisfies any of the following conditions:

\begin{enumerate}
\item Every non-invertible left non-zero divisor $a \in R$ satisfies $a (b R) \subseteq (b R) a$ for any $b \in R$. 

\item Every non-invertible left non-zero divisor is central. 

\item A power of each left non-zero divisor is morphic. 

\item $R$ is a commutative or morphic or strongly $\pi $-regular or $U N$-ring. \end{enumerate}

If moreover $E (R_{R})$ is cyclic, then $R_{R}$ is injective. 
\end{proposition}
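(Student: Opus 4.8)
The plan is to make the statement concrete. An $R$-endomorphism of $R_R$ is left multiplication $\lambda_a\colon r\mapsto ar$ by the element $a=\lambda_a(1)$; it is injective precisely when $r_R(a)=0$, i.e. $a$ is a left non-zero divisor, and its image is $aR$. Hence $R_R$ is right weakly co-Hopfian if and only if $aR\subseteq^{ess}R_R$ for every left non-zero divisor $a$, and this is automatic when $a\in U(R)$, so in every case it suffices to treat non-invertible left non-zero divisors. Two standing facts will be used throughout: a power of a left non-zero divisor is again a left non-zero divisor (if $r_R(a)=0$ then $r_R(a^n)=0$ by an easy induction), and, as observed just before the statement, a left non-zero divisor possessing a right inverse is a unit.

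First I would settle $(1)$, which also covers $(2)$ and the commutative part of $(4)$. Let $a$ be a non-invertible left non-zero divisor and $0\neq b\in R$. By hypothesis $ab\in a(bR)\subseteq (bR)a\subseteq bR$, and $ab\neq 0$ since $r_R(a)=0$ and $b\neq 0$; thus $0\neq ab\in aR\cap bR$. As $b$ was an arbitrary nonzero element, $aR$ meets every nonzero principal right ideal, so $aR\subseteq^{ess}R_R$. A central element $a$ satisfies $a(bR)=(bR)a=baR$, so $(2)\Rightarrow(1)$; and if $R$ is commutative every element is central, so $(1)$ applies.

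Next I would show that $(3)$, as well as the morphic, strongly $\pi$-regular and $UN$ instances of $(4)$, each forces \emph{every} left non-zero divisor to be a unit, which is more than enough. For $(3)$: given a left non-zero divisor $a$, pick $n\geq 1$ with $a^n$ morphic; since $r_R(a^n)=0$, being morphic yields $R/a^nR\cong r_R(a^n)=0$, so $a^nR=R$, hence $a^n$ has a right inverse and therefore $a^n\in U(R)$; then $a\cdot\bigl(a^{n-1}(a^n)^{-1}\bigr)=1$, so $a$ has a right inverse and $a\in U(R)$. If $R$ is (right) morphic, each left non-zero divisor is itself morphic, so $(3)$ applies. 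If $R$ is strongly $\pi$-regular, write $a^n=ue=eu$ with $u\in U(R)$ and $e^2=e$; then $r_R(a^n)=r_R(e)=(1-e)R$, so $r_R(a^n)=0$ forces $e=1$, whence $a^n=u\in U(R)$ and $a\in U(R)$ as before. If $R$ is a $UN$-ring and $a$ is a non-invertible left non-zero divisor, write $a=un$ with $u\in U(R)$ and $n$ nilpotent; then $n=u^{-1}a$ is again a left non-zero divisor, but a nonzero nilpotent element has nonzero right annihilator while $n=0$ forces $a=0$, a contradiction. So in each of these cases every left non-zero divisor is a unit and $R_R$ is even co-Hopfian.

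Finally, the last sentence is immediate: if in addition $E(R_R)$ is cyclic, then $E(R_R)$ is cyclic and $R_R$ is weakly co-Hopfian, which is condition $(b)$ of Proposition~\ref{If E(R) is cyclic and R is weakly co-Hopf, then R is injective}, so $R$ is right self-injective. The only step that needs a little care is $(1)$: one must recall that weak co-Hopficity of the cyclic module $R_R$ is detected on principal submodules, so exhibiting the single nonzero element $ab\in aR\cap bR$ already suffices; the rest is routine, resting on the remark that a right-invertible left non-zero divisor is a unit.
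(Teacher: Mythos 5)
Your proposal is correct and follows essentially the same route as the paper: the intersection argument $0\neq ab\in aR\cap bR$ for (1), reduction of (2) to (1), the morphic computation $R/a^nR\cong r_R(a^n)=0$ for (3), and the appeal to the earlier equivalence (cyclic injective hull plus weakly co-Hopfian implies self-injective) for the final claim. The only difference is that you spell out the verifications for the strongly $\pi$-regular and $UN$ cases of (4), which the paper leaves to the reader.
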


\begin{proof}
(1) Suppose $a \in R$ is a left non-zero divisor of $R$. If $a$ is invertible, then clearly $a R =R$ is essential. Otherwise, $a$ satisfies $a (b R) \subseteq (b R) a$, for any $b \in R$. Let $0 \neq b \in R$ be any non-zero element. Since $r_R(a) =0$ (and $R$ has an identity), $a b R \neq 0$. By hypothesis $0 \neq a b R \subseteq (b R) a \cap a R \subseteq b R \cap a R$. Thus $a R$ is essential in $R$ and hence $R$ is right weakly co-Hopfian. 

(2) Follows from (1). 

(3) Let $a$ be a left non-zero divisor of $R$. By hypothesis, there exists $n \geq 1$ such that $R/a^{n} R \simeq r_{R} (a^{n}) =0$, as $r_{R} (a) =0$. Hence $R =a^{n} R$, which shows that $a$ is invertible. 

(4) The mentioned classes of rings imply one of the above properties. 
\end{proof}

\medskip In \cite{Z}, a submodule $N$ of a module $M$ is $\delta $-small in $M$ if $M \neq X +N$ for any proper submodule $X$ with $M/X$ singular. The sum of all $\delta $-small submodules of a module $M$ is denoted by $\delta  (M)$, and we use $\delta  (R)$ for $\delta  (R_{R})$. As shown in \cite{Z}, $\delta  (R)$ is an ideal of $R$ which is the intersection of all essential maximal right ideals of $R$, so $s o c (R_{R}) \subseteq \delta  (R)$ and $J (R) \subseteq \delta  (R)$; indeed, $J (R/s o c (R_{R})) =\delta  (R)/s o c (R_{R})$.

\begin{definition}
\cite{EKP} A module $M$ is called  {$\delta $-Hopfian} if every surjective endomorphism of $M$ has a $\delta $-small kernel in $M$. 
\end{definition}

In our discussion after Lemma \ref{Basic Lemma I}, we
have already mentioned that if $E (R_{R})$ is cyclic and generalized Hopfian, then $R_{R}$ is injective. Generalized Hopfian modules are $\delta $-Hopfian and, by \cite[Example 1]{EKP},
the converse need not be true.

\begin{proposition}
\label{If E(R) is cyclic and delta-Hopfian, then R is injective}
If $E (R_{R})$ is cyclic and $\delta $-Hopfian, then $R_{R}$ is injective. 
\end{proposition}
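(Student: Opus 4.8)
The plan is to apply the $\delta$-Hopfian hypothesis to the canonical surjection onto $E=E(R_R)$ that is furnished by Lemma~\ref{Basic Lemma I}, and then to play this off against the fact that $R$ embeds essentially in $E$.

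First I would set $E=E(R_R)$ with essential embedding $\epsilon\colon R\to E$ and, since $E$ is cyclic, fix the data of Lemma~\ref{Basic Lemma I}: a surjection $\pi\colon R\to E$ together with an endomorphism $g\colon E\to E$ such that $\pi=g\circ\epsilon$, where moreover $g$ is surjective by part~(3) and $\Ima(\epsilon)+\Ker(g)=E$ by part~(4). In particular $g$ is a \emph{surjective endomorphism of $E$}, so the $\delta$-Hopfian hypothesis on $E$ gives that $\Ker(g)$ is $\delta$-small in $E$.

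Now I would compare this with the essentiality of $\Ima(\epsilon)$. Since $\epsilon$ is an injective hull, $\Ima(\epsilon)\subseteq^{ess}E$, and therefore the quotient $E/\Ima(\epsilon)$ is singular. If $\Ima(\epsilon)$ were a proper submodule of $E$, then it would be a proper submodule with singular quotient whose sum with the $\delta$-small submodule $\Ker(g)$ is all of $E$, contradicting the definition of a $\delta$-small submodule. Hence $\Ima(\epsilon)=E$, so $\epsilon$ is both injective and surjective, i.e. an isomorphism, and $R_R\cong E$ is injective.

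There is no serious obstacle here beyond lining up the conclusions of Lemma~\ref{Basic Lemma I} correctly; the one point worth isolating is the observation that a $\delta$-small submodule can never complete a proper essential submodule to the whole module, which is immediate from the definition since a proper essential submodule has singular quotient. (One could alternatively route the argument through Zhou's structure description of $\delta$-small submodules, extracting a projective semisimple complement of $\Ima(\epsilon)$ which must vanish by essentiality, but the direct definitional route is shorter and is the one I would write up.)
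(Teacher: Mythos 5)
Your proof is correct, and its skeleton coincides with the paper's: both extract from Lemma~\ref{Basic Lemma I} the surjective endomorphism $g$ of $E$ with $\mathrm{Im}(\epsilon)+\mathrm{Ker}(g)=E$, and both then apply the $\delta$-Hopfian hypothesis to conclude that $\mathrm{Ker}(g)$ is $\delta$-small. The one place you diverge is the final step. The paper invokes Zhou's structure result \cite[Lemma 1.2]{Z} to upgrade $E=\mathrm{Im}(\epsilon)+\mathrm{Ker}(g)$ to a decomposition $E=\mathrm{Im}(\epsilon)\oplus Y$ with $Y$ projective semisimple, and then kills $Y$ by essentiality of $\mathrm{Im}(\epsilon)$. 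You instead argue directly from the definition of $\delta$-small: since $\mathrm{Im}(\epsilon)\subseteq^{ess}E$ forces $E/\mathrm{Im}(\epsilon)$ to be singular, a proper $\mathrm{Im}(\epsilon)$ could not be completed to $E$ by a $\delta$-small submodule. This is valid and slightly more self-contained, since it needs only the definition recorded in the paper rather than the external citation; what it gives up is the extra structural information (the projective semisimple complement) that Zhou's lemma provides, which the paper reuses in the same form in the proof of Lemma~\ref{Basic Lemma II}. Either write-up is acceptable.
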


\begin{proof}
Let $E =E (R_{R})$ be cyclic with embedding $\epsilon  :R \rightarrow E$ and canonical projection $\pi  :R \rightarrow E$. There exists an epimorphism $g :E \rightarrow E$ such that $E =\mathrm{I} \mathrm{m} (\epsilon ) +\mathrm{K} \mathrm{e} \mathrm{r} (g)$, by Lemma \ref{Basic Lemma I}. Since $E$ is $\delta $-Hopfian, $\mathrm{K} \mathrm{e} \mathrm{r} (g)$ is $\delta $-small in $E$. Now, by \cite[Lemma 1.2]{Z},
it follows that $E =\mathrm{I} \mathrm{m} (\epsilon ) \oplus Y$ where $Y$ is a projective semisimple submodule of $E$. Inasmuch as $\mathrm{I} \mathrm{m} (\epsilon )$ is essential in $E$, $Y =0$ and $\epsilon $ is an isomorphism. 
\end{proof}

\begin{lemma}
\label{Basic Lemma II}Let $E (R_{R}) =x R$ and write $x_{0}:=1_{R} =x a$ with $a \in R$. If, for some $u ,v \in U (R)$ and some $n \geq 1$, $(u a v)^{n} \in \delta  (R)$, then $R$ is a semisimple ring. 
\end{lemma}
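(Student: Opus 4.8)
The plan is to run the hypothesis through the machinery of Lemma~\ref{Basic Lemma I}. Put $E:=E(R_R)$ and let $\epsilon\colon R\hookrightarrow E$ be the inclusion; since $E=xR$ we have the surjection $\pi\colon R\to E$, $r\mapsto xr$, and because $xa=1$ we may take $f=\lambda_a\colon R\to R$ (left multiplication by $a$) as the map with $\epsilon=\pi\circ f$, so that a surjective $g\colon E\to E$ with $\pi=g\circ\epsilon$ exists and satisfies $g(ar)=x(ar)=(xa)r=r$ for all $r\in R$ (we identify $R$ with $\mathrm{Im}(\epsilon)\subseteq E$). On the ring side I would first record the elementary facts: $xa=1$ forces $r_R(a)=0$, and since $u,v\in U(R)$ the element $c:=(uav)^{n}$ is again a left non-zero-divisor, so $\lambda_c\colon R_R\to R_R$, $r\mapsto cr$, is a monomorphism and $cR\cong R_R$; moreover $c\in\delta(R)$, and any cyclic submodule of $\delta(R)$ lies inside a finite sum of $\delta$-small submodules, hence is itself $\delta$-small, so $cR$ is $\delta$-small in $R_R$, and therefore also in $E$ (images of $\delta$-small submodules under homomorphisms are $\delta$-small, \cite{Z}).

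Next, since $\lambda_{u^{-1}}$ and $\lambda_{v^{-1}}$ are automorphisms of $R_R$, they extend to automorphisms $\widehat{u^{-1}},\widehat{v^{-1}}$ of $E=E(R_R)$; set $\widetilde g:=\widehat{v^{-1}}\circ g\circ\widehat{u^{-1}}$, still a surjective endomorphism of $E$. Using $\widehat{u^{-1}}(uavr)=avr$, $g(avr)=vr$ and $\widehat{v^{-1}}(vr)=r$ for $r\in R$ one gets $\widetilde g(uavr)=r$, and a short induction — at each stage the element $(uav)^{k}r$ stays inside $\mathrm{Im}(\epsilon)$, where $\widetilde g$ acts as just computed — gives $\widetilde g^{\,n}(cr)=r$ for every $r\in R$. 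Hence $\widetilde g^{\,n}$ carries $cR$ isomorphically onto $\mathrm{Im}(\epsilon)$; since $cR$ is $\delta$-small in $E$, so is its image $\widetilde g^{\,n}(cR)=\mathrm{Im}(\epsilon)$. Thus $\mathrm{Im}(\epsilon)$ is simultaneously an essential and a $\delta$-small submodule of $E$.

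Now I finish as follows. By Lemma~\ref{Basic Lemma I}(4), $\mathrm{Im}(\epsilon)+\mathrm{Ker}(g)=E$, with $\mathrm{Ker}(g)\neq E$ because $g$ is a nonzero surjection. Applying \cite[Lemma~1.2]{Z} to the $\delta$-small submodule $\mathrm{Im}(\epsilon)$ together with the submodule $\mathrm{Ker}(g)$ produces a projective semisimple submodule $Y\subseteq\mathrm{Im}(\epsilon)$ with $E=\mathrm{Ker}(g)\oplus Y$. As $g$ is a surjection of the injective module $E$, $\mathrm{Ker}(g)$ is a direct summand of $E$ with $E/\mathrm{Ker}(g)\cong E$, so $E\cong Y$ is projective and semisimple; consequently $R_R\cong\mathrm{Im}(\epsilon)$, being a submodule of the semisimple module $E$, is semisimple, i.e.\ $R$ is a semisimple ring.

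The delicate point is the construction of $\widetilde g$ and the identity $\widetilde g^{\,n}(cr)=r$: one must keep in mind that $g$ obeys only the ``local'' relation $g\circ\epsilon=\pi$, i.e.\ $g(ar)=r$ on the copy of $R$ sitting inside $E$ (there is no reason for $g$ to restrict to the identity on all of $E$), so the induction has to be organized so that every intermediate value lies in $\mathrm{Im}(\epsilon)$, where this relation is available. Everything else — stability of $\delta$-small submodules under homomorphisms, finite sums and passage to submodules, and the decomposition lemma — is taken verbatim from \cite{Z}.
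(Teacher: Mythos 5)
Your proof is correct, and it reaches the conclusion by the same basic mechanism as the paper --- use the injectivity of $E$ to manufacture a map that ``undoes'' multiplication by $b=uav$, iterate it $n$ times, and feed the hypothesis $(uav)^{n}\in\delta(R)$ into \cite[Lemma 1.2]{Z} --- but the two arguments are organized quite differently. The paper absorbs the units at the outset by changing the generator (setting $y=xu^{-1}$, $y_{0}=v=yb$), extends $y_{0}r\mapsto yr$ to $g:yR\to yR$, and grinds out the element identity $y=yc^{n}b^{n}$; from $b^{n}\in\delta(R)$ it concludes $y\in y\delta(R)\subseteq\delta(yR)$, so $E=yR=\delta(yR)$ is $\delta$-small in itself, and \cite[Lemma 1.2]{Z} makes $E$ projective semisimple directly. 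You keep the units outside, conjugating $g$ by the automorphisms of $E$ that extend left multiplication by $u^{-1}$ and $v^{-1}$, and you argue at the submodule level: $cR$ is $\delta$-small because $c\in\delta(R)$ and $\delta(R)$ is a sum of $\delta$-small right ideals, $\delta$-smallness is preserved by the surjection $\widetilde{g}^{\,n}$, hence $\mathrm{Im}(\epsilon)$ is $\delta$-small in $E$, and \cite[Lemma 1.2]{Z} is then applied to the decomposition $E=\mathrm{Im}(\epsilon)+\mathrm{Ker}(g)$ supplied by Lemma \ref{Basic Lemma I}(4). What your route buys is a clean separation of the formal $\delta$-calculus (closure of $\delta$-small submodules under submodules, finite sums and homomorphic images) from the injectivity step, and it avoids the one auxiliary fact the paper's version leans on, namely that $\delta(M)$ is $\delta$-small in the cyclic module $M=yR$; the paper's computation is in turn more elementary and self-contained at the element level. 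You were also right to flag that the relation $g\circ\epsilon=\pi$ is available only on $\mathrm{Im}(\epsilon)$: your induction correctly keeps every intermediate element $b^{k}r$ inside that copy of $R$, which is exactly where the identity $\widetilde{g}(bs)=s$ is licensed.
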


\begin{proof}
Let $y_{0}:=x_{0} v =(x u^{ -1}) (u a v) =y b$, where $y =x u^{ -1}$ and $b =u a v$. Then $E (R_{R}) =y R$. Let $f :y_{0} R \rightarrow y R$ be given by $f (y_{0} r) =y r$. As $r_{R} (y_{0}) =0$, $f$ is well-defined. Since $(y R)_{R}$ is injective, there exists an $R$-homomorphism $g :y R \rightarrow y R$ such that $f =g$ on $y_{0} R$. Then
\begin{equation*}y =f (y_{0}) =g (y_{0}) =g (y b) =g (y) b\text{.}
\end{equation*}If we write $g (y) =y c$ where $c \in R$, we get
\begin{equation*}y =g (y) b =y c b\text{.}
\end{equation*}Thus, $g (y) =g (y c b) =g (y) c b =y c^{2} b$, so
\begin{equation*}y =g (y) b =y c^{2} b^{2}\text{.}
\end{equation*}Arguing as above repeatedly, one has
\begin{equation*}y =y c^{n} b^{n}\text{\thinspace \thinspace \thinspace }\text{for all}\text{\thinspace \thinspace \thinspace }n \geq 1.
\end{equation*}If $b^{n} \in \delta  (R)$, then $y \in y \delta  (R) \subseteq \delta  (y R)$, so $y R =\delta  (y R)$. Since $\delta  (y R)$ is a $\delta $-small submodule of $y R$, it follows that $y R$ is projective and semisimple by \cite[Lemma
1.2]{Z}. Since $R_{R}$ is essential in $y R$, it follows that $R =y R$ is semisimple. 
\end{proof}

\begin{proposition}
\label{E(R) cyclic and R/J(R) is a UN-ring implies Injective}
\label{E(R) cyclic implies Injective VII} Suppose $R_{R}$ has a cyclic injective hull. Then $R_{R}$ is injective under any of the following conditions:

\begin{enumerate}
\item $R/J (R)$ is a $U N$-ring. 

\item $R/J (R)$ is unit-regular and $J (R)$ is nil. 

\item $R/\delta  (R)$ is semisimple. \end{enumerate}
\end{proposition}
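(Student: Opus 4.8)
The plan is to treat the three conditions separately, each time exploiting the presentation of $E:=E(R_R)$ as a cyclic module: fix a generator $E=xR$ and pick $a\in R$ with $xa=1_R$. In every case the target is to force $a$ into $U(R)$, for then $x=a^{-1}\in R$, so $E=xR=R$ and $R_R$ is injective. Note that in Lemma~\ref{Basic Lemma I} one may take the injective map $f\colon R\to R$ to be $r\mapsto ar$, so $r_R(a)=\Ker(f)=0$, i.e. $a$ is a left non-zero divisor; hence any hypothesis that makes left non-zero divisors invertible finishes the job (this is also how such hypotheses feed into Proposition~\ref{E(R) cyclic implies Injective II}). For (1): if $a\notin U(R)$, then $\bar a:=a+J(R)$ is a non-unit of the $UN$-ring $R/J(R)$, so $\bar a=\bar u\,\bar t$ with $\bar u$ a unit and $\bar t$ nilpotent; lifting $\bar u^{-1}$ to a unit $u\in U(R)$ gives $\overline{ua}=\bar u^{-1}\bar a=\bar t$ nilpotent, whence $(ua)^{n}\in J(R)\subseteq\delta(R)$ for some $n$, and Lemma~\ref{Basic Lemma II} (with the second unit equal to $1_R$) yields that $R$ is semisimple and a fortiori right self-injective.

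For (2) the plan is to prove the purely ring-theoretic statement that, when $R/J(R)$ is unit-regular and $J(R)$ is nil, every left non-zero divisor of $R$ is a unit. Let $c\in R$ with $r_R(c)=0$. By unit-regularity, $\bar c=\bar e\,\bar v$ in $R/J(R)$ with $\bar e^{2}=\bar e$ and $\bar v$ a unit; since $J(R)$ is nil, lift $\bar v^{-1}$ to a unit $v\in U(R)$ and $\bar e$ to an idempotent $e\in R$, and replace $c$ by $b:=cv$, which is still a left non-zero divisor and satisfies $b=e+j$ with $j\in J(R)$. In the Peirce decomposition of $R$ relative to $e$ the $(1,1)$-block of $b$ is $e+eje\in e+J(eRe)$, a unit of $eRe$; clearing the off-diagonal Peirce blocks by left- and right-multiplication with suitable units of $R$ (a Schur-complement reduction) produces $P,Q\in U(R)$ with $PbQ=\alpha+\beta$, where $\alpha\in U(eRe)$ and $\beta\in J\bigl((1-e)R(1-e)\bigr)=(1-e)J(R)(1-e)$, so $\beta$ is nilpotent. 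If $m$ is least with $\beta^{m}=0$, then $\alpha\beta^{m-1}=0$ (orthogonal Peirce components), so $(\alpha+\beta)\beta^{m-1}=0$, and since $r_R(PbQ)=0$ this forces $\beta^{m-1}=0$, hence $\beta=0$; then $PbQ=\alpha\in eRe$, so $\alpha(1-e)=0$, i.e. $1-e\in r_R(\alpha)=0$ and $e=1$, whence $\bar c$, and therefore $c$, is a unit. Applying this to $a$ finishes (2); equivalently it verifies hypothesis~(3) of Proposition~\ref{E(R) cyclic implies Injective II}, so $R_R$ is weakly co-Hopfian and, $E$ being cyclic, $R_R$ is injective.

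For (3) the plan is to show that the cyclic module $E$ is $\delta$-Hopfian and then quote Proposition~\ref{If E(R) is cyclic and delta-Hopfian, then R is injective}. Write $E\cong R/K$ for a right ideal $K$, and let $h$ be a surjective endomorphism of $R/K$; necessarily $h(r+K)=cr+K$ for some $c$ with $cK\subseteq K$. As $\delta(R)$ is an ideal, $h$ maps the submodule $(K+\delta(R))/K$ into itself and hence induces a surjective endomorphism of $R/(K+\delta(R))$. But the latter is a cyclic quotient of the semisimple ring $R/\delta(R)$, so it has finite length and that induced endomorphism is an isomorphism; thus $\Ker(h)\subseteq(K+\delta(R))/K$. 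Since $\delta(R)$ is $\delta$-small in $R_R$ (the fact already used in the proof of Lemma~\ref{Basic Lemma II}) and homomorphic images of $\delta$-small submodules are $\delta$-small, $(K+\delta(R))/K$, and hence $\Ker(h)$, is $\delta$-small in $E$; so $E$ is $\delta$-Hopfian and $R_R$ is injective.

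I expect the Peirce/Schur-complement step in (2) to be the only genuinely non-routine point: one must check that the off-diagonal Peirce blocks of $b=e+j$ really can be cleared by units of $R$ and that the surviving corner $\beta$ lands in the nil radical $(1-e)J(R)(1-e)$ of $(1-e)R(1-e)$, after which the hypothesis $r_R(b)=0$ is precisely what is needed to squeeze $\beta$ to $0$ and then to conclude $e=1$. Cases (1) and (3) are essentially bookkeeping once Lemma~\ref{Basic Lemma II}, respectively Proposition~\ref{If E(R) is cyclic and delta-Hopfian, then R is injective}, is invoked, the only non-formal input in (3) being the standard facts that $\delta(R)$ is $\delta$-small in $R_R$ and that epimorphic images of $\delta$-small submodules are $\delta$-small.
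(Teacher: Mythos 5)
Your proof is correct, and parts (2) and (3) take genuinely different routes from the paper's. Part (1) is essentially the paper's argument: both reduce to Lemma~\ref{Basic Lemma II} after factoring $\overline{a}$ as a unit times a nilpotent in $R/J(R)$; you absorb the unit on the left (taking the lemma's $v=1$) where the paper absorbs it on the right, an immaterial difference. For (2) the paper is much shorter: replacing the generator $x$ by $xu$ it may assume $\overline{a}$ is idempotent, so $a-a^{2}\in J(R)$ is nilpotent, $a^{n}(1-a)^{n}=(a-a^{2})^{n}=0$, and $r_{R}(a)=0$ kills $(1-a)^{n}$, whence $a\in U(R)$. Your Peirce/Schur-complement argument instead proves the stronger, purely ring-theoretic statement that every left non-zero divisor of such a ring is a unit (so $R$ is right co-Hopfian, not merely weakly co-Hopfian), and it does check out: the corner $e+eje$ is a unit of $eRe$, the off-diagonal blocks are cleared by the square-zero unit perturbations $1-CA^{-1}$ and $1-A^{-1}B$, the Schur complement lies in $(1-e)J(R)(1-e)$ and is therefore nilpotent, and $r_{R}(PbQ)=0$ forces it to vanish and then forces $e=1$. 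Note, however, that the machinery is avoidable even for your stronger claim: once $\overline{b}=\overline{e}$ is idempotent you already have $b-b^{2}\in J(R)$ nil together with $r_{R}(b)=0$, so the paper's three-line computation applies verbatim to $b$, and there is no need to lift the idempotent or to decompose. For (3) the paper simply cites \cite[Proposition 4]{EKP} for the fact that cyclic modules over a ring with $R/\delta(R)$ semisimple are $\delta$-Hopfian; your argument is a correct self-contained proof of exactly the needed special case --- the reduction modulo the ideal $\delta(R)$, the finite length of $R/(K+\delta(R))$, and the standard facts that $\delta(R)$ is $\delta$-small in the finitely generated module $R_{R}$ and that images and submodules of $\delta$-small submodules are $\delta$-small all hold as you state --- after which both proofs conclude via Proposition~\ref{If E(R) is cyclic and delta-Hopfian, then R is injective}.
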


\begin{proof}
(1) Let $\overline{R} =R/J (R)$. Write $E (R_{R}) =x R$ and $x_{0}:=1_{R} =x a$ with $a \in R$. If $a \in U (R)$, then $x =x_{0} a^{ -1} \in R$, so $R_{R} =x R$ is injective. If $a \notin U (R)$, then $\overline{a} \notin U (\overline{R})$, so $\overline{a} =\overline{u} \overline{b}$ where $\overline{u} \in U (\overline{R})$ with inverse $\overline{v}$ and $\overline{b}$ is nilpotent. So $\overline{a} \overline{v} =\overline{u} \overline{b} \overline{v}$ is nilpotent, i.e., $(a v)^{n} \in \delta  (R)$ for some $n \geq 1$. As $v \in U (R)$, $R$ is semisimple by Lemma \ref{Basic Lemma II}. 

(2) Write $E (R_{R}) =x R$ and $x_{0}:=1_{R} =x a$ with $a \in R$. As $\overline{R}$ is unit-regular, $\overline{a} =\overline{u}\, \overline{e}$ where $\overline{u} \in U (\overline{R})$ and $\overline{e}^{2} =\overline{e}$, so $u \in U (R)$. Since $E (R_{R}) =(x u) R$ and $x_{0} =x a =(x u) (u^{ -1} a)$ and $\overline{u^{ -1} a} =\overline{e}$, we may assume that $\overline{a}^{2} =\overline{a}$. Hence $a^{2} -a \in J (R)$. Since $J (R)$ is nil, $a^{n} (1 -a)^{n} =(a -a^{2})^{n} =0$ for some $n >0$. Since $r_{R} (a) =0$, it follows that $(1 -a)^{n} =0$, so $a \in U (R)$. Thus, $x =x_{0} a^{ -1} \in R$, so $R =E (R_{R})$ is injective. 

(3) If $R/\delta  (R)$ is semisimple, then by \cite[Proposition 4]{EKP}
every finitely generated $R$-module is $\delta $-Hopfian, so $E (R_{R})$ is $\delta $-Hopfian. Hence $R_{R}$ is injective by Proposition \ref{If E(R) is cyclic and delta-Hopfian, then R is injective}.
\end{proof}

\begin{problem}
\label{Problem 2} Is $R_{R}$ injective if $R_{R}$ has a cyclic injective hull and $R/J (R)$ is unit-regular? 
\end{problem}

Observe that Problem \ref{Problem 2} will have
a positive answer if Problem \ref{Problem 1} has a positive answer. 

\medskip In \cite{DIYZ 2}, a module $M$ is called a $C 4$-module if, whenever $X$ and $Y$ are submodules of $M$ with $M =X \oplus Y$ and $\alpha  :X \rightarrow Y$ is a homomorphism with $k e r (\alpha )  \subseteq ^{ \oplus }X$, we have $\mathrm{I} \mathrm{m} (\alpha )  \subseteq ^{ \oplus }Y$. The class of $C 4$-modules is a simultaneous extension of both the $C 3$-modules and the $S F$-modules.

\begin{proposition}
\label{E(R) cyclic implies Injective VIII}
A ring $R$ is right self-injective if and only if $E (R_{R})$ is cyclic and projective, and $R_{R}$ is a $C 4$-module. 
\end{proposition}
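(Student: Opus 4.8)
The plan is to treat the two implications separately. The forward direction is immediate: if $R$ is right self-injective then $R_R$ is injective, so $E(R_R)=R_R$, which is cyclic (generated by $1$) and projective (free of rank one); since an injective module is a $C3$-module and every $C3$-module is a $C4$-module, $R_R$ is a $C4$-module. So the substance is in the converse.

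For the converse, assume $E:=E(R_R)$ is cyclic and projective and $R_R$ is a $C4$-module. By Lemma~\ref{Basic Lemma I} there is a surjection $\pi\colon R\to E$; since $E$ is projective, $\pi$ splits, so $E$ is isomorphic to a direct summand of $R_R$, say $E\cong eR$ with $e^2=e\in R$. As $E$ is injective, $eR$ is an injective right $R$-module and $R_R=eR\oplus(1-e)R$. Writing $P:=(1-e)R$ and $Q:=E(P)$, I would first note that it is enough to prove $P$ injective: then $R_R=eR\oplus P$ is a finite direct sum of injectives, hence injective, i.e.\ $R$ is right self-injective.

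The core of the argument is a transport step. Since $eR$ is injective, $E(R_R)=E(eR\oplus P)\cong eR\oplus E(P)=eR\oplus Q$, and combined with $E(R_R)\cong eR$ this yields an isomorphism $\theta\colon eR\oplus Q\to eR$. Put $P':=\theta(0\oplus P)$ and $Q':=\theta(0\oplus Q)$, submodules of $eR$ with $P'\subseteq Q'$, $P'\cong P$, $Q'\cong Q$, and $P'\subseteq^{\mathrm{ess}}Q'$ (as $P\subseteq^{\mathrm{ess}}Q$ and $\theta$ is an isomorphism). Fix an isomorphism $\xi\colon(1-e)R\to P'$ and compose with the inclusion $P'\hookrightarrow eR$ to obtain a monomorphism $\beta\colon(1-e)R\to eR$ with $\Ima(\beta)=P'$. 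Applying the $C4$ property to the decomposition $R_R=(1-e)R\oplus eR$ and the map $\beta$ — whose kernel is $0$, a direct summand of $(1-e)R$ — gives $P'=\Ima(\beta)\subseteq^{\oplus}eR$. Writing $eR=P'\oplus T$ and using $P'\subseteq Q'\subseteq eR$, the modular law gives $Q'=P'\oplus(Q'\cap T)$, so $P'$ is a direct summand of $Q'$ and, being essential in $Q'$, equals $Q'$. Hence $(1-e)R\cong P'=Q'\cong Q=E((1-e)R)$, so $(1-e)R$ is isomorphic to an injective module and is therefore injective, which completes the proof via the reduction above.

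The step I expect to be the main obstacle is figuring out how $C4$ should be exploited at all. The naive attempt — to show $E$ is Dedekind-finite and then invoke Proposition~\ref{If E(R) is cyclic and R is weakly co-Hopf, then R is injective} — is hopeless, since a cyclic projective injective module need not be Dedekind-finite (for example the full ring of linear transformations of an infinite-dimensional vector space satisfies all the hypotheses and is right self-injective without being directly finite). The right idea is to implant an isomorphic copy of $(1-e)R$ inside $eR$, use $C4$ to promote it to a direct summand of $eR$, and then let the collision "essential submodule $=$ direct summand" pin it down as a copy of $E((1-e)R)$. Checking that $\theta$ can be arranged so that $P'$ and $Q'$ behave as stated, and the modular-law bookkeeping at the end, are the routine parts.
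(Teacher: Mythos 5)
Your proof is correct, and its overall strategy coincides with the paper's: use projectivity of $E$ to split the surjection $R\to E$, write $R$ as (an injective copy of $E$) $\oplus$ (a complement $K$), manufacture a monomorphism from $K$ into the injective summand with zero kernel, and invoke the $C4$ property to make its image a direct summand of that injective summand, hence injective, hence $K$ injective and $R$ injective. The only genuine difference is how the monomorphism is produced. The paper writes $E=xR$, takes $\eta:R\to E$, $\eta(r)=xr$, and a splitting $h:E\to R$ with $\eta h=\mathrm{id}_E$, so $R=\mathrm{Im}\,h\oplus\ker\eta$ with $\mathrm{Im}\,h\cong E$; it then exploits the essential embedding $\ker\eta\subseteq R\subseteq E$ and simply restricts the monomorphism $h$ to $\ker\eta$, landing inside $\mathrm{Im}\,h$ --- no injective hull of the complement and no transported copies $P'$, $Q'$ are needed, and injectivity of $\ker\eta$ follows at once from its image being a summand of the injective module $\mathrm{Im}\,h$. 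Your route, via the isomorphism $eR\oplus E\bigl((1-e)R\bigr)\cong eR$ and the essential-versus-summand collision forcing $P'=Q'$, is longer but equally valid, and it has the minor bonus of exhibiting $(1-e)R$ as literally isomorphic to its own injective hull. Your forward direction and your side remark that Dedekind-finiteness cannot be extracted from the hypotheses (the full linear ring of an infinite-dimensional vector space satisfies them all) are also correct.
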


\begin{proof}
The necessity is obvious. For the sufficiency, since $E:=E (R_{R}) =x R$ is cyclic, let $\eta  :R \rightarrow E$ be the natural $R$-epimorhism given by $\eta  (r) =x r$, $r \in R$. Inasmuch as $E$ is projective, there exists an $R$-homomorphism $h :E \rightarrow R_{R}$ such that $\eta  h =i d_{E}$. Clearly, $h$ is monic and $R =\ensuremath{\operatorname*{Im}}h \oplus \ker  \eta $. If $T =h (\ker \eta )$, then $T \cong \ker \eta $, as $h$ is monic. Now, since $T \subseteq \ensuremath{\operatorname*{Im}}h$, $T \cap \ker  \eta  =0$, and since $\ker \eta   \subseteq ^{ \oplus }R_{R}$ and $R_{R}$\ is a $C 4$-module, we infer from \cite[Theorem 2.2]{DIYZ 2}
that $T  \subseteq ^{ \oplus }R_{R}$. Inasmuch as $T \subseteq \ensuremath{\operatorname*{Im}}h \subseteq R$, it follows that $T  \subseteq ^{ \oplus }\ensuremath{\operatorname*{Im}}h$. Since $\ensuremath{\operatorname*{Im}}h$ is injective, $T \cong \ker \eta $ is injective, and so $R =\ensuremath{\operatorname*{Im}}h \oplus \ker  \eta $ is injective. 
\end{proof}

\section{On Hypercyclic Rings}

%As pointed out in the introduction, there is a gap in the proof of one of the main results in \cite[Proposition 1.6]{O1},
%where it was claimed that if $R$ is a semilocal right hypercyclic ring, then $s o c (R_{R})  \subseteq ^{e s s}R_{R}$. Subsequently, Lemma 1.12, Lemma 1.15 and Theorem 1.18 of \cite{O1}
%remain uncertain. Moreover, because of the gap in \cite[Proposition 1.6]{O1},
%\cite[Theorem 3.4]{JS}
%is also uncertain. However, in view of Theorem \ref{If R is semilocal ess rt duo with E(R) and E(R/J(R)) are cyclic, then R is rt PF-ring} above, the aforementioned results of \cite{O1}
%and \cite{JS} are now valid by requiring in addition the ring $R$ to be essentially right duo. 
%
%Furthermore, Caldwell has asked in his thesis \cite[Page 53]{C 1}
%whether every hypercyclic ring is self-injective, and conjectured yes as an answer to his question. The conjecture still remains open, among several other
%questions on the subject. For example it is not known if the notion of hypercyclic is left-right symmetric. Moreover, Osofsky asked in \cite{O1}
%if the (Jacobson) radical of a local hypercyclic ring is nil. While we still don't have answers to these questions.

In this section we revisit some of Osofosky's
results in \cite{O1} and highlight some new information about the class
of hypercyclic rings and show that some of the main results in \cite{O1}
can be obtained by requiring only $E (R_{R})$ and/or $E (R/J (R))$ to be cyclic. 

\medskip In \cite{C},
Caldwell proved that a left perfect, right hypercyclic ring is artinian. We will show below if $R$ is a basic right (or left) perfect ring such that $E (R/J^{2})$ is cyclic as a right $R$-module, then $R$ is right Artinian. Moreover, if in addition $E (R_{R})$ is cyclic as a right $R$-module, then $R$ is quasi-Frobenius. Recall first that a semiperfect ring $R$ is called basic if $1$ is a sum of orthogonal non-isomorphic local idempotents of $R$.

\begin{theorem}
Let $R$ be a basic right (or left) perfect ring. If $E (R/J^{2})$ is cyclic as a right $R$-module, then $R$ is right Artinian. If in addition $E (R_{R})$ is cyclic as a right $R$-module, then $R$ is quasi-Frobenius. 
\end{theorem}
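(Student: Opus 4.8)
The plan is to prove the two assertions in three moves: first show that $J/J^{2}$ is finitely generated as a right $R$-module; then upgrade this to nilpotency of $J$ via König's lemma and perfectness, which forces right Artinianity; and finally feed in the hypothesis on $E(R_{R})$ to land on quasi-Frobenius. One may assume $R$ is not semisimple. Since a basic perfect ring is semilocal, $R/J\cong\bigoplus_{i=1}^{n}S_{i}$ with the $S_{i}$ pairwise non-isomorphic simple right $R$-modules, \emph{each occurring exactly once}; fix orthogonal local idempotents $e_{1},\dots,e_{n}$ with $1=\sum e_{i}$.

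\textbf{Step 1 (finiteness of the socle).} The module $R/J^{2}$ has Loewy length at most $2$, so its socle is essential in it; hence $E:=E(R/J^{2})=E(\mathrm{soc}(R/J^{2}))$ and $\mathrm{soc}(E)=\mathrm{soc}(R/J^{2})$. Suppose some isomorphism type $S_{j}$ occurred at least twice in the semisimple module $\mathrm{soc}(E)$. Realizing inside the injective module $E$ injective hulls of two such copies of $S_{j}$, one checks that their sum is direct; being a finite direct sum of injectives it is an injective submodule of $E$, hence a direct summand: $E=\bigl(E(S_{j})\oplus E(S_{j})\bigr)\oplus E'$. As $E$ is cyclic and a direct summand of a cyclic module is cyclic, $E(S_{j})\oplus E(S_{j})$ is cyclic; since $E(S_{j})\neq 0$, Nakayama shows its top $P\oplus P$, where $P:=E(S_{j})/E(S_{j})J\neq 0$, is a non-zero quotient of $R/J$. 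But every quotient of $R/J\cong\bigoplus_{i=1}^{n}S_{i}$ is square-free, whereas $P\oplus P$ is not — a contradiction. Hence each $S_{j}$ occurs at most once in $\mathrm{soc}(R/J^{2})$, so $\mathrm{soc}(R/J^{2})$ embeds in $\bigoplus_{i=1}^{n}S_{i}$ and is finitely generated; in particular $J/J^{2}\subseteq\mathrm{soc}(R/J^{2})$ is a finitely generated right $R$-module. I expect this to be the main obstacle, and it is where the hypothesis ``basic'' is used in an essential way.

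\textbf{Step 2 (nilpotency of $J$).} Pick $a_{1},\dots,a_{m}\in J$ whose images generate $J/J^{2}$ as a right $R$-module. For each $p$, the multiplication map from the $p$-fold tensor power of $J/J^{2}$ over the semisimple ring $R/J$ onto $J^{p}/J^{p+1}$ shows both that $J^{p}/J^{p+1}$ is generated as a right $R$-module by the $p$-fold products $a_{i_{1}}\cdots a_{i_{p}}$ $(1\le i_{s}\le m)$, and — since such a tensor product of finite-dimensional modules is finite-dimensional on each side — that $J^{p}/J^{p+1}$ has finite length as a right $R/J$-module. Now suppose $J$ is not nilpotent. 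Then $J^{p}\neq J^{p+1}$ for all $p$, because $J^{p}=J^{p+1}$ would give $J^{p}=0$ by Nakayama's lemma for perfect rings (applied to $J^{p}$ as a right module when $R$ is right perfect, as a left module when $R$ is left perfect). Hence for every $p$ some product $a_{i_{1}}\cdots a_{i_{p}}$ is non-zero. Deleting the first, or the last, letter of a non-zero product keeps it non-zero, so the set of words $w$ with $a_{w}\neq0$ forms a finitely branching tree with nodes at every level; König's lemma yields an infinite branch, i.e. a sequence $b_{1},b_{2},\dots$ in $J$ (got by appending letters if $R$ is left perfect, by prepending them if $R$ is right perfect) all of whose initial products $b_{1}\cdots b_{p}$ (resp.\ $b_{p}\cdots b_{1}$) are non-zero — contradicting the property, characteristic of left (resp.\ right) perfect rings, that every sequence in $\mathrm{rad}(R)$ has all sufficiently long such products equal to $0$. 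Therefore $J^{m}=0$ for some $m$, and then $R_{R}$ carries the finite filtration $R\supseteq J\supseteq\cdots\supseteq J^{m}=0$ with factors $R/J$ and $J^{p}/J^{p+1}$ all of finite length; thus $R$ is right Artinian.

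\textbf{Step 3 (quasi-Frobenius).} Assume in addition that $E(R_{R})$ is cyclic. By Step 2, $R$ is right Artinian, in particular semilocal, so Lemma~\ref{If R is semilocal and E(R) is cyclic, then R is semiperfect self-inj} applies and $R$ is right self-injective. A right self-injective, right Artinian ring is quasi-Frobenius, which completes the proof.
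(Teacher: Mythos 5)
Your proof is correct, but it reaches the first assertion by a genuinely different route than the paper. Where the paper invokes the dual-square-free machinery (basic semiperfect $\Rightarrow$ $R_{R}$ is a $DSF$-module, cyclic images of $DSF$-modules are $DSF$, injective $DSF$-modules are square-free, submodules of square-free modules are square-free) to conclude that $J/J^{2}$ is multiplicity-free, your Step 1 gets the same conclusion directly: a repeated simple in $\mathrm{soc}(R/J^{2})$ would force $E(S_{j})\oplus E(S_{j})$ to be a cyclic direct summand of $E(R/J^{2})$, whose non-zero top $P\oplus P$ would be a non-square-free quotient of the multiplicity-free module $R/J$ --- using ``basic'' only through the decomposition $R/J\cong\bigoplus S_{i}$. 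This is more elementary and self-contained. Likewise, where the paper simply cites Osofsky's Lemma 11 of \cite{O3} to pass from ``$J/J^{2}$ finitely generated plus perfect'' to ``right Artinian'', your Step 2 reproves that implication from scratch via the surjections $(J/J^{2})^{\otimes p}\twoheadrightarrow J^{p}/J^{p+1}$, K\"onig's lemma and $T$-nilpotency, with the sides handled correctly (appending letters against left perfectness, prepending against right perfectness). Step 3 differs only in which auxiliary result is invoked: you use the semilocal lemma of \cite{IY 5} rather than the quasi-duo corollary; both are legitimate. One small blemish: the parenthetical claim that the tensor powers of $J/J^{2}$ are ``finite-dimensional on each side'' is not justified, since Step 1 only gives finite generation of $J/J^{2}$ as a \emph{right} module; but this is harmless, because the generation of $J^{p}/J^{p+1}$ by the $m^{p}$ products $a_{i_{1}}\cdots a_{i_{p}}$ already yields finite length on the right, which is all your filtration argument needs.
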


\begin{proof}
Since $R$ is a basic semiperfect ring, it follows from \cite[Corollary 2.4]{IY 6}
that $R_{R}$ is a right $D S F$-module (i.e. $R$ is right quasi-duo). Since homomorphic images of $D S F$-modules are again $D S F$, we infer that $E (R/J^{2})$ is $D S F$ as a right $R$-module. We claim that $M:=E (R/J^{2})$ is an $S F$-module. To see this, let $A$ and $B$ be submodules of $M$ with $A\overset{f}{ \cong }B$ and $A \cap B =0.$ We need to show that $A =B =0$. Since $M$ is injective, there exist two submodules $C$ and $D$ of $M$ such that $A  \subseteq ^{e s s}C  \subseteq ^{ \oplus }M$ and $B  \subseteq ^{e s s}D  \subseteq ^{ \oplus }M\text{.}$ Clearly, $C \cap D =0$, $C \oplus D  \subseteq ^{ \oplus }M$, and $C \oplus D$ is injective. Now, the monomorphism $A\overset{f}{ \longrightarrow }B \subseteq D$ extends to a homomorphism $g :C \longrightarrow D\text{.}$ Since $A  \subseteq ^{e s s}C$ $ and $ $B  \subseteq ^{e s s}D\text{,}$ we have $\ker g =0$ and $\ensuremath{\operatorname*{Im}}g  \subseteq ^{e s s}D$. Inasmuch as $C \oplus D$ is injective, we infer that $\ensuremath{\operatorname*{Im}}g  \subseteq ^{ \oplus }D$ and so $\ensuremath{\operatorname*{Im}}g =D$. Therefore $C \cong D$. Now, write $M =C \oplus D \oplus T$ for a submodule $T$ of $M$. Since $M$ is a $D S F$-module, we infer that $C =D =0$, and so $A =B =0$, as required. Since the class of $S F$-modules is closed under submodules, it follows that $(J/J^{2})_{R}$ is an $S F$-module. Since $R$ is semilocal, $(J/J^{2})_{R}$ is a finite direct sum of non-isomorphic simple modules, and hence finitely generated. Now, by \cite[Lemma 11]{O3},
$R$ is right Artinian, proving the first assertion. Since $R$ is right quasi-duo and $E (R_{R})$ is cyclic as a right $R$-module, it follows from Corollary \ref{If E(R) is cyclic and DSF then R is self-inj}
that $R$ is right self-injective. Now the second assertion follows from the fact that a ring $R$ is quasi-Frobenius if and only if $R$ is right Artinian and right (or left) self-injective. 
\end{proof}

\begin{theorem}
\label{Quasi-duo hypercyclic rings}If $R$ is a right hypercyclic ring, then the following are equivalent:

\begin{enumerate}
\item $R$ is a right quasi-duo ring. 

\item The injective hull of every cyclic right $R$-module is a $D S F$-module. 

\item $R$ is a right self-injective, right distributive ring. \end{enumerate}
\end{theorem}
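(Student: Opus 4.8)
The plan is to prove the cyclic implications $(1)\Rightarrow(2)\Rightarrow(3)\Rightarrow(1)$, leveraging the machinery already developed for rings with cyclic injective hulls together with the characterisation of right quasi-duo rings via $DSF$-modules.

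For $(1)\Rightarrow(2)$: suppose $R$ is right quasi-duo, hence right $DSF$ by \cite[Corollary 2.17]{IY 2}. Let $C$ be a cyclic right $R$-module. Since $R$ is right hypercyclic, $E(C)$ is cyclic, say $E(C)=xR$, so $E(C)$ is a homomorphic image of $R_R$; as homomorphic images of $DSF$-modules are $DSF$, we conclude $E(C)$ is $DSF$. Actually I would go further and mimic the argument already used in the ``basic perfect'' theorem above: a cyclic $DSF$-module that is also injective is in fact $SF$ (square-free) — the proof there splits the two isomorphic copies $A\cong B$ off as essential submodules $C$, $D$ of summands, extends the isomorphism to $g\colon C\to D$, uses injectivity to get $\operatorname{Im}g=D$ hence $C\cong D$, writes $M=C\oplus D\oplus T$ and invokes the $DSF$ property to force $C=D=0$. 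So $E(C)$ is square-free.

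For $(2)\Rightarrow(3)$: apply $(2)$ to $C=R_R$ itself, so $E(R_R)$ is cyclic (being hypercyclic) and $DSF$; by Corollary \ref{If E(R) is cyclic and DSF then R is self-inj} $R$ is right self-injective. For right distributivity, recall from the discussion after that corollary that $M$ is distributive iff every submodule of $M$ is $DSF$, equivalently (by the square-free characterisation) iff every quotient of $M$ is square-free. Since every cyclic right $R$-module $C$ has $E(C)$ square-free by the strengthened form of $(2)$, and $C\subseteq E(C)$, every cyclic — hence every finitely generated, hence (distributivity being a lattice-theoretic condition checkable on finitely generated submodules) every — right $R$-module is square-free; thus $R_R$ is distributive. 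The cleaner route: every quotient $R/T$ embeds in its injective hull which is square-free, so every quotient of $R_R$ is square-free, which by \cite{S} is exactly right distributivity.

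For $(3)\Rightarrow(1)$: if $R$ is right distributive then by \cite[Lemma 2.2]{IY 6} every submodule of $R_R$ is $DSF$, in particular $R_R$ is a $DSF$-module, and by \cite[Corollary 2.17]{IY 2} this says precisely that $R$ is right quasi-duo. (The right self-injectivity hypothesis is not even needed for this direction.) The main obstacle I anticipate is making the passage in $(2)\Rightarrow(3)$ fully rigorous: one must be careful that ``every cyclic module has square-free injective hull'' genuinely upgrades to ``every quotient of $R$ is square-free'' — but since quotients of $R$ are exactly the cyclic modules and a submodule of a square-free module is square-free, this is immediate, so in fact the whole argument is quite short once the auxiliary $DSF\Rightarrow SF$-for-injectives lemma (already essentially proved in the basic-perfect theorem) is in hand.
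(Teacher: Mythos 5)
Your proposal is correct and follows essentially the same route as the paper: the same cycle $(1)\Rightarrow(2)\Rightarrow(3)\Rightarrow(1)$, the same identification of right quasi-duo with $R_R$ being $DSF$, the same ``cyclic injective $DSF$-module is square-free'' argument recycled from the basic-perfect theorem, and Stephenson's criterion to get distributivity. The one step to reorder in $(2)\Rightarrow(3)$ is the appeal to Corollary \ref{If E(R) is cyclic and DSF then R is self-inj}: its hypothesis is that $R$ is right quasi-duo, i.e.\ that $R_R$ (not $E(R_R)$) is $DSF$, and $DSF$ need not pass to submodules, so you must first obtain right distributivity (hence quasi-duo) via your square-free argument and only then conclude self-injectivity --- which is exactly the order the paper uses and which your ``cleaner route'' already supplies.
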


\begin{proof}
$(1) \Rightarrow (2)$. If $T$ is a right ideal of $R$, then $E (R/T)$ is a cyclic right $R$-module and hence a $D S F$-module since $R$ is right quasi-duo. 

$(2) \Rightarrow (3)$. If $A$ is a right ideal of $R$, then $E:=E (R/A)$ is cyclic and hence a $D S F$-module by the hypotheses. We claim that $E$ is a square-free module. To see this, let $A$ and $B$ are submodules of $E$ with $A\overset{f}{ \cong }B$ and $A \cap B =0$. Since $E$ is injective, $A  \subseteq ^{e s s}A_{1}  \subseteq ^{ \oplus }E$ and $B  \subseteq ^{e s s}B_{1}  \subseteq ^{ \oplus }E$, where $A_{1}$ and $B_{1}$ are submodules of $E$. Clearly, $f$ can be extended to a monomorphism $h :A_{1} \rightarrow B_{1}$ with $\ensuremath{\operatorname*{Im}}h  \subseteq ^{e s s}B_{1}$. But since $A_{1}$ is injective, $\ensuremath{\operatorname*{Im}}h  \subseteq ^{ \oplus }B_{1}$, and so $A_{1} \cong B_{1}$. Now, write $E =A_{1} \oplus B_{1} \oplus C$, for a submodule $C \subseteq E$. Since the class of $D S F$-modules is closed under direct summands, $A_{1} \oplus B_{1}$ is a $D S F$-module, a contradiction. This shows that $A =B =0$, proving the claim. Therefore, both $R/A$ and $E (R/A)$ are square-free modules, and hence $R$ is a right distributive ring. Since right distributive rings are right quasi-duo and $E (R_{R})$ is cyclic as a right $R$-module, it follows from Corollary \ref{If E(R) is cyclic and DSF then R is self-inj}
that $R$ is right self-injective. 

$(3) \Rightarrow (1)$. Right distributive rings are right $D S F$, and hence right quasi-duo. 
\end{proof}

\begin{corollary}
\label{Properties of local hypercyclic rings I}
If $R$ is a local right hypercyclic ring, then:

\begin{enumerate}
\item $R$ is right self-injective. 

\item The injective hull of every cyclic right $R$-module is a $D S F$-module. 

\item $R$ is right distributive. 

\item $R$ is right uniform. 

\item $R$ is right uniserial. \end{enumerate}
\end{corollary}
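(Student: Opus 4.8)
The plan is to obtain (1)--(3) in one stroke from Theorem~\ref{Quasi-duo hypercyclic rings}, and then to peel off (4) and (5) from what has been established. First I would observe that a local ring $R$ has $J(R)$ as its unique maximal right ideal, and $J(R)$ is of course two-sided; hence $R$ is right quasi-duo. Since $R$ is right hypercyclic, Theorem~\ref{Quasi-duo hypercyclic rings} applies and its condition~(1) holds, so conditions~(2) and~(3) of that theorem hold as well. Condition~(2) is precisely statement~(2) of the corollary, and condition~(3) says that $R$ is right self-injective and right distributive, which yields statements~(1) and~(3).

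For (4) I would use that a local ring has no idempotents other than $0$ and $1$, so that $R_R$ is indecomposable; by~(1) it is also injective (being right self-injective, $R_R=E(R_R)$), and an indecomposable injective module is uniform. Hence $R$ is right uniform. (Alternatively, (4) is immediate from (5).)

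For (5) I would exploit distributivity from~(3) together with the local structure. By Stephenson's characterization of distributive modules~\cite{S}, for all $x,y\in R$ there are $a,b\in R$ with $a+b=1$, $xa\in yR$ and $yb\in xR$. As $R$ is local, $a$ and $b$ cannot both lie in $J(R)$ (else $1=a+b\in J(R)$), so one of them is a unit; if $a\in U(R)$ then $x=xaa^{-1}\in yR$, and if $b\in U(R)$ then $y\in xR$. Thus the principal right ideals of $R$ are linearly ordered by inclusion. Passing to arbitrary right ideals: if $A\not\subseteq B$, choose $a_{0}\in A\setminus B$; for each $b\in B$ one cannot have $a_{0}R\subseteq bR$ (that would force $a_{0}\in B$), so $bR\subseteq a_{0}R\subseteq A$ for every $b\in B$, whence $B\subseteq A$. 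Therefore the right ideals of $R$ are totally ordered and $R$ is right uniserial.

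The step that will require the most care is (5): distributivity by itself does not force a ring to be uniserial, and one genuinely needs the local hypothesis --- through the fact that in a local ring a non-unit is never $1$ minus a non-unit --- both to linearly order the principal right ideals and then to upgrade this to a linear ordering of all right ideals. Everything else is a direct invocation of Theorem~\ref{Quasi-duo hypercyclic rings} and of standard facts about indecomposable injective and local rings.
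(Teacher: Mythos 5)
Your proposal is correct. Items (1)--(3) are obtained exactly as in the paper: a local ring is right quasi-duo because $J(R)$ is its unique maximal right ideal, and then Theorem~\ref{Quasi-duo hypercyclic rings} delivers self-injectivity, the $DSF$ property of injective hulls of cyclics, and right distributivity. Your (4) is a minor variant of the paper's (the paper notes that every right ideal is essential in a direct summand of the self-injective module $R_R$, which must be $R$ itself by locality; you instead observe that $R_R$ is an indecomposable injective, hence uniform --- both are standard and valid). The real divergence is in (5). The paper proves uniseriality by invoking the hypercyclic hypothesis once more: for right ideals $A,B$ it embeds $(A/A\cap B)\oplus(B/A\cap B)$ into $E(R/(A\cap B))\cong R/T$, which is a local, hence indecomposable, module, forcing one summand to vanish. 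You instead derive (5) purely from (3) and locality via Stephenson's element-wise criterion for distributivity: $a+b=1$ with $xa\in yR$ and $yb\in xR$ forces one of $a,b$ to be a unit in a local ring, so principal right ideals are comparable, and the upgrade to arbitrary right ideals is routine. Your route is more elementary and self-contained --- it shows (5) is a formal consequence of ``local $+$ right distributive'' and needs neither injectivity nor any further use of the hypercyclic hypothesis --- whereas the paper's argument avoids the element-wise characterization of distributivity at the cost of reusing the standing hypothesis. Both are sound.
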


\begin{proof}
$(1)$, $(2)$ \& $(3)$ They follow from Theorem \ref{Quasi-duo hypercyclic rings}, since local rings
are quasi-duo. 

$(4)$ Let $A$ be a right ideal of $R$. Since $R$ is right self-injective by $(1)$, $A$ is essential in a direct summand of $R$. Inasmuch as $R$ is local, $A$ must be essential in $R$. This shows that $R$ is right uniform. 

$(5)$ Let $A$ and $B$ be non-zero right ideals of $R$. By $(4)$, $A \cap B \neq 0$. Now, $(A/A \cap B) \oplus (B/A \cap B) \subseteq R/A \cap B \subseteq E (R/A \cap B) \cong R/T$, where $T$ is a right ideal of $R$. Since $R$ is a local ring, $R/T$ is a local injective module, and hence indecomposable. Therefore, either $A/A \cap B =0$ or $B/A \cap B =0$. This shows that either $A \subseteq B$ or $B \subseteq A$. 
\end{proof}

A ring $R$ is called right principally injective ($p$-injective) \cite{NY1}, if every homomorphism from a
principal right ideal to $R$ is left multiplication by an element of $R$; equivalently if $l_{R} r_{R} (x) =R x$, for all $x \in R$.

\begin{lemma}
\label{Right uniserial right p-injective rings are left uniserial}
Every right uniserial right $p$-injective ring $R$ is left uniserial. 
\end{lemma}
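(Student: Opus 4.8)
The plan is to show that any two left ideals $A, B$ of $R$ are comparable by using $p$-injectivity to transfer the right-uniserial chain condition from right ideals of the form $r_R(x)$ to left ideals of the form $Rx$, and then to reduce an arbitrary left ideal to a principal one. The key identity to exploit is $l_R r_R(x) = Rx$, valid for all $x \in R$ since $R$ is right $p$-injective, together with the fact that $A \subseteq B$ for principal left ideals $A = Ra$, $B = Rb$ can be detected via right annihilators: if $r_R(b) \subseteq r_R(a)$ then $a \in l_R r_R(a) \subseteq l_R r_R(b) = Rb$, so $Ra \subseteq Rb$. Thus it suffices to compare $r_R(a)$ and $r_R(b)$, and those are right ideals of $R$, which are totally ordered by hypothesis.

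\textbf{Key steps, in order.} First I would reduce to principal left ideals: given arbitrary left ideals $A$ and $B$, if they are not comparable, pick $a \in A \setminus B$ and $b \in B \setminus A$; it then suffices to show $Ra$ and $Rb$ are comparable, because $Ra \subseteq Rb$ forces $a \in B$ (contradiction) and $Rb \subseteq Ra$ forces $b \in A$ (contradiction), so in fact a failure of comparability for $A,B$ yields a failure for the principal left ideals $Ra, Rb$. Hence it is enough to prove that any two principal left ideals $Ra$, $Rb$ are comparable. Second, since $R$ is right uniserial, the right ideals $r_R(a)$ and $r_R(b)$ are comparable; say $r_R(b) \subseteq r_R(a)$. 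Third, apply the $p$-injectivity identity: taking left annihilators reverses inclusions, so $l_R r_R(a) \subseteq l_R r_R(b)$, i.e. $Ra \subseteq Rb$. This establishes comparability of $Ra$ and $Rb$, completing the argument.

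\textbf{Main obstacle.} The only delicate point is the reduction from arbitrary left ideals to principal ones: one must make sure that non-comparability of $A$ and $B$ genuinely produces elements $a \in A \setminus B$ and $b \in B \setminus A$ whose principal left ideals are also non-comparable, rather than merely non-equal. The argument above handles this cleanly: $Ra \subseteq Rb$ would put $a$ (which lies in $Ra$ since $R$ has an identity) into $Rb \subseteq B$, contradicting $a \notin B$, and symmetrically for $b$; so $Ra$ and $Rb$ cannot be comparable, contradicting the principal case. Everything else is a direct application of the defining identity $l_R r_R(x) = Rx$ of right $p$-injective rings and the order-reversing behavior of annihilators, both of which are elementary. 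So I expect no real difficulty beyond bookkeeping the direction of inclusions.
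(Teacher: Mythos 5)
Your proposal is correct and follows essentially the same route as the paper: both use the identity $l_R r_R(x) = Rx$ together with the total ordering of the right ideals $r_R(a)$, $r_R(b)$ to conclude that principal left ideals are totally ordered, and then reduce arbitrary left ideals to the principal case. The only cosmetic difference is that the paper's reduction is direct (from $x \in A \setminus B$ it deduces $Ry \subseteq Rx$ for every $y \in B$, hence $B \subseteq Rx \subseteq A$) whereas yours argues by contradiction; both are fine.
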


\begin{proof}
Let $x ,y \in R$. Since $R$ is right uniserial, either $r_{R} (x) \subseteq r_{R} (y)$ or $r_{R} (y) \subseteq r_{R} (x)$. Now, since $R$ is right $p$-injective, $l_{R} r_{R} (x) =R x$ and $l_{R} r_{R} (y) =R y$. This shows that either $R x \subseteq R y$ or $R y \subseteq R x$. Now, let $A$ and $B$ be left ideals of $R$ such that $A \nsubseteq B$. Let $x \in A$, $x \notin B$. Now, if $y \in B$, then $R x \nsubseteq R y$. Therefore, $R y \subseteq R x$, and so $B \subseteq R x \subseteq A$ as required. 
\end{proof}

\begin{corollary}
\label{Properties of local hypercyclic rings II}
Every local right hypercyclic ring $R$ is left uniserial. In particular, $R$ is left uniform and left distributive. 
\end{corollary}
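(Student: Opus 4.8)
The plan is to combine Corollary \ref{Properties of local hypercyclic rings I} with Lemma \ref{Right uniserial right p-injective rings are left uniserial}. By Corollary \ref{Properties of local hypercyclic rings I}, a local right hypercyclic ring $R$ is right self-injective and right uniserial. Since a right self-injective ring is in particular right $p$-injective (every homomorphism from a principal right ideal to $R$ extends, by injectivity, to a homomorphism $R\to R$, which is left multiplication by an element of $R$), we conclude that $R$ is a right uniserial, right $p$-injective ring. Lemma \ref{Right uniserial right p-injective rings are left uniserial} then immediately gives that $R$ is left uniserial.

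For the ``in particular'' clause: a left uniserial ring is, by definition, one whose left ideals are totally ordered by inclusion. In such a ring any two non-zero left ideals are comparable, hence have non-zero intersection, so $R$ is left uniform; and for left distributivity, one checks the modular identity $A\cap(B+C)=(A\cap B)+(A\cap C)$ for left ideals $A,B,C$ by noting that, since $B$ and $C$ are comparable, say $B\subseteq C$, both sides equal $A\cap C$. Thus $R$ is left uniform and left distributive.

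The only point requiring a moment's care is the passage from right self-injective to right $p$-injective, but this is a standard and immediate consequence of the defining extension property of injective modules, so there is no real obstacle here; the corollary is essentially a bookkeeping assembly of results already established in the excerpt.
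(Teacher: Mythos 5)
Your proof is correct and follows essentially the same route as the paper: apply Corollary \ref{Properties of local hypercyclic rings I} to get that $R$ is right self-injective (hence right $p$-injective) and right uniserial, then invoke Lemma \ref{Right uniserial right p-injective rings are left uniserial}; the ``in particular'' clause is the same easy observation the paper leaves as clear. Your explicit justification of the step from right self-injective to right $p$-injective, and of left uniform and left distributive, is a welcome bit of added detail but not a different argument.
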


\begin{proof}
The claim follows from Corollary \ref{Properties of local hypercyclic rings I}
and Lemma \ref{Right uniserial right p-injective rings are left uniserial}.
The last assertion is clear. 
\end{proof}

Maximal and almost maximal valuation rings play an important role in commutative ring theory.
Uniserial rings are the natural extension of valuation rings in the non-commutative setting. Recall that a ring $R$ is called left maximal if, for any family $\left \{I_{\alpha } \mid \alpha  \in \Lambda \right \}$ of left ideals $I_{\alpha }$\ of $R$, every system of pairwise solvable congruences of the form $x \equiv x_{\alpha }$ $(I_{\alpha })$ with $\alpha  \in \Lambda $ and $x_{\alpha } \in R$ has a simultaneous solution in $R$.  With the help of our results above, it follows immediately from \cite[Theorem 9]{AS} that every local right hypercyclic ring is left maximal.

\begin{theorem}
Every local right hypercyclic ring is left maximal. 
\end{theorem}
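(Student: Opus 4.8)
The plan is to apply the structure theorem for left almost maximal / left maximal uniserial rings due to Amini--Amini--Facchini or, as the excerpt suggests, the result \cite[Theorem 9]{AS}, which characterizes when a uniserial ring is left maximal in terms of a self-injectivity-type condition. By Corollary \ref{Properties of local hypercyclic rings I} and Corollary \ref{Properties of local hypercyclic rings II}, a local right hypercyclic ring $R$ is right self-injective, right uniserial, and left uniserial; moreover by Corollary \ref{Properties of local hypercyclic rings I}(1) it is right self-injective and hence in particular right $p$-injective. The strategy is therefore to feed precisely these properties into \cite[Theorem 9]{AS}.

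First I would recall that \cite[Theorem 9]{AS} states (in the uniserial setting) that a left uniserial ring $R$ is left maximal if and only if $R$ is left self-injective relative to the appropriate class, or more precisely that a right self-injective left uniserial ring is left maximal — the linearly compact / algebraic compactness of $R_R$ transfers to a pairwise-solvable-congruences condition on the left. So the key step is simply to verify the hypotheses of that theorem: (i) $R$ is left uniserial, which is Corollary \ref{Properties of local hypercyclic rings II}; (ii) $R$ is right self-injective, which is Corollary \ref{Properties of local hypercyclic rings I}(1); and possibly (iii) $R$ is right uniserial as well, which is Corollary \ref{Properties of local hypercyclic rings I}(5). Once these are in place, the conclusion that $R$ is left maximal is immediate from \cite[Theorem 9]{AS}.

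Concretely, the proof would read: by Corollary \ref{Properties of local hypercyclic rings I} the ring $R$ is right self-injective and right uniserial, and by Corollary \ref{Properties of local hypercyclic rings II} it is left uniserial; now \cite[Theorem 9]{AS} applies and yields that $R$ is left maximal. If \cite[Theorem 9]{AS} is phrased for two-sided uniserial rings with a one-sided injectivity hypothesis, then the combination of Corollary \ref{Properties of local hypercyclic rings I}(1),(5) and Corollary \ref{Properties of local hypercyclic rings II} supplies exactly the needed input.

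The main obstacle — and the only real content here — is making sure the hypotheses of \cite[Theorem 9]{AS} match what we have available, i.e. checking whether that theorem needs \emph{left} self-injectivity (which we have not established; indeed left self-injectivity for local right hypercyclic rings is essentially the open Caldwell-type question for this restricted class) or only \emph{right} self-injectivity together with two-sided uniseriality. Since the excerpt asserts the result "follows immediately" from \cite[Theorem 9]{AS} using "our results above," I would expect \cite[Theorem 9]{AS} to require only the one-sided self-injectivity plus two-sided chain conditions, in which case there is no obstacle at all and the proof is the one-line citation above. The delicate bookkeeping is entirely in quoting \cite[Theorem 9]{AS} with the correct side conventions and confirming that "right self-injective $+$ right uniserial $+$ left uniserial" is exactly its hypothesis.
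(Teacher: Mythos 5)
Your proposal matches the paper's own justification exactly: the paper gives no separate proof, but states immediately before the theorem that the result ``follows immediately from \cite[Theorem 9]{AS}'' once one knows from Corollary \ref{Properties of local hypercyclic rings I} and Corollary \ref{Properties of local hypercyclic rings II} that a local right hypercyclic ring is right self-injective and uniserial on both sides. Your caveat about which side of self-injectivity \cite[Theorem 9]{AS} requires is the right thing to check, and the paper's phrasing confirms that only the properties already established (right self-injectivity plus two-sided uniseriality) are needed.
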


Recall that a ring $R$ is called a dual ring \cite{HN} if every right or left
ideal of $R$ is an annihilator.

\begin{proposition}
\label{Local hypercyclic rings with ess socle are dual rings}
Let $R$ be a local right hypercyclic ring, and consider the following conditions:

\begin{enumerate}
\item $R$ is right duo. 

\item $R$ is essentially right duo. 

\item $s o c (R_{R})  \subseteq ^{e s s}R_{R}$. 

\item $R$ is dual. \end{enumerate}

Then $(1) \Leftrightarrow (2) \Rightarrow (3) \Leftrightarrow (4)$. 
\end{proposition}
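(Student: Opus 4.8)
The plan is to run through the implications in the order $(1)\Rightarrow(2)$, $(2)\Rightarrow(1)$, $(2)\Rightarrow(3)$, $(4)\Rightarrow(3)$, $(3)\Rightarrow(4)$, using throughout that by Corollary~\ref{Properties of local hypercyclic rings I} and Corollary~\ref{Properties of local hypercyclic rings II} a local right hypercyclic ring $R$ is right self-injective and is both right and left uniserial, hence right and left uniform. The implication $(1)\Rightarrow(2)$ is immediate, right duo being stronger than essentially right duo. For $(2)\Rightarrow(1)$: right uniformity makes every nonzero right ideal essential, hence two-sided by $(2)$, and the zero ideal is an ideal, so $R$ is right duo. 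For $(2)\Rightarrow(3)$: $R$ is local (so semilocal) and right hypercyclic (so right hypersimple, a simple module being cyclic), so under $(2)$ the ring $R$ meets the hypotheses of Proposition~\ref{If R is semilocal and E(R/J(R)) is cyclic, then R has essential socle}, which gives $soc(R_{R})\subseteq^{ess}R_{R}$.

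For $(4)\Rightarrow(3)$: if $R$ is dual then the right ideal $J$ is a right annihilator, so $J=r_{R}l_{R}(J)$. Since $R$ is local, $l_{R}(J)=\{x\in R:xJ=0\}=soc(R_{R})$ (for $x\neq 0$, $xJ=0$ forces $r_{R}(x)=J$, hence $xR\cong R/J$ is simple). If $soc(R_{R})=0$ this would give $J=r_{R}(0)=R$, impossible; so $soc(R_{R})\neq 0$, and since $R_{R}$ is uniform this socle is essential. For $(3)\Rightarrow(4)$: assuming $(3)$, $R$ is semiperfect, right self-injective, with essential right socle, i.e. $R$ is right pseudo-Frobenius, in particular a right cogenerator ring. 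Hence for each right ideal $T$ an embedding $R/T\hookrightarrow R^{I}$ displays $T$ as the intersection of the kernels of the coordinate maps $R\to R$; each such map is left multiplication by some $c_{i}$, so $T=r_{R}(\sum_{i}Rc_{i})$ is a right annihilator. Thus every right ideal is a right annihilator.

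It remains to show every left ideal is a left annihilator. Being right self-injective, $R$ is right $p$-injective, so $l_{R}r_{R}(b)=Rb$ for every $b$; as $R$ is left uniserial, every finitely generated left ideal is principal, hence already a left annihilator. For an arbitrary left ideal $L=\bigcup_{b\in L}Rb$ I would proceed as follows. Let $y\in l_{R}r_{R}(L)$, so $r_{R}(L)\subseteq r_{R}(y)$. If $r_{R}(b)\subseteq r_{R}(y)$ for some $b\in L$ then $y\in l_{R}r_{R}(b)=Rb\subseteq L$, and we are done; otherwise $r_{R}(b)\supsetneq r_{R}(y)$ for all $b\in L$, and comparability of right ideals forces $r_{R}(y)=r_{R}(L)$. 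Assume this second alternative holds with $y\notin L$; I aim for a contradiction. Since $R$ is local and left uniserial, one checks that $L\subsetneq Ry$ and in fact $L\subseteq Jy$; consequently $r_{R}(Jy)\subseteq r_{R}(L)=r_{R}(y)$, while $r_{R}(jy)\supseteq r_{R}(y)$ for each $j\in J$ gives the reverse inclusion, so $r_{R}(Jy)=r_{R}(y)$. But $r_{R}(Jy)=\{s\in R:ys\in soc(_{R}R)\}$, and since $R$ is right pseudo-Frobenius $soc(_{R}R)=soc(R_{R})$ by \cite{NY}, a nonzero right ideal; as $R_{R}$ is uniform, $yR\cap soc(_{R}R)\neq 0$, yielding $s_{0}$ with $ys_{0}\neq 0$ but $ys_{0}\in soc(_{R}R)$, i.e. $s_{0}\in r_{R}(Jy)\setminus r_{R}(y)$, a contradiction. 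Hence $y\in L$, so $l_{R}r_{R}(L)=L$, and $R$ is dual.

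The main obstacle is exactly this last step, namely that an arbitrary, possibly non-finitely-generated, left ideal is a left annihilator: the reduction to the configuration $L\subseteq Jy$ and the extraction of a contradiction from the essentiality of the socle are where hypothesis $(3)$ is genuinely used, and the delicate point is ruling out a ``limit'' element $y$ with $r_{R}(y)=r_{R}(L)$ that escapes $L$. The identities $l_{R}r_{R}(b)=Rb$ and $soc(_{R}R)=soc(R_{R})$ are what make this work.
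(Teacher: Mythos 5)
Your proof is correct, and its skeleton matches the paper's: $(1)\Leftrightarrow(2)$ via the fact that every nonzero right ideal of a local right hypercyclic ring is essential, $(2)\Rightarrow(3)$ via Proposition \ref{If R is semilocal and E(R/J(R)) is cyclic, then R has essential socle}, and $(3)\Rightarrow(4)$ via the right PF property (for right ideals) plus right $p$-injectivity, left uniseriality and $soc(R_R)=soc(_RR)$ (for left ideals). The two places where you genuinely diverge are worth noting. For $(4)\Rightarrow(3)$ the paper simply cites \cite[Lemma 3.2 and Theorem 3.5]{HN}, whereas you give a short self-contained argument from $J=r_Rl_R(J)$, $l_R(J)=soc(R_R)$ and uniformity of $R_R$; this is a clean elementary replacement for the citation. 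For the left-ideal half of $(3)\Rightarrow(4)$ the paper constructs $N=\bigcap_{Rx\supseteq L}Rx=l_R\bigl(\sum_{Rx\supseteq L}r(Rx)\bigr)$ and, when $L\neq N$, exhibits $L=Jy=l_R(zR)$ explicitly; you instead prove $l_Rr_R(L)=L$ directly by taking $y\in l_Rr_R(L)\setminus L$, forcing $L\subseteq Jy\subsetneq Ry$ and $r_R(Jy)=r_R(y)$, and contradicting this with an element $s_0$ satisfying $0\neq ys_0\in soc(R_R)=soc(_RR)$. Both arguments pivot on the same configuration (an element $y$ sitting just above $L$, killed off by the essential two-sided socle), but yours delivers the double-annihilator identity for every left ideal in one uniform step, while the paper's exhibits the annihilating set concretely. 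One small point to make explicit when writing this up: the identification $r_R(Jy)=\{s:ys\in soc(_RR)\}$ uses $soc(_RR)=r_R(J)$, which holds because $R$ is (semi)local.
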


\begin{proof}
$(1) \Rightarrow (2)$. The implication is obvious. 

$(2) \Rightarrow (1)$. The implication follows from $(3)$ of Corollary \ref{Properties of local hypercyclic rings I}. 

$(2) \Rightarrow (3)$. The implication follows from Proposition \ref{If R is semilocal and E(R/J(R)) is cyclic, then R has essential socle}. 

$(3) \Rightarrow (4)$. By Corollary \ref{Properties of local hypercyclic rings I}, $R$ is right self-injective. Inasmuch as $s o c (R_{R})  \subseteq ^{e s s}R_{R}$, we infer that $R$ is a right $P F$-ring. Consequently, $R$ is an injective cogenerator. This means, if $T$ is a right ideal of $R$ then $R/T$ embeds in a direct product of copies of $R$, and this implies $T =r (X)$, where $X$ is a subset of $R$. Now let $L$ be a left ideal of $R$. By Corollary \ref{Properties of local hypercyclic rings I} and
Corollary \ref{Properties of local hypercyclic rings II}, the left (right) ideals
of $R$ are totally ordered by inclusion. Define $N:=\bigcap _{R x \supseteq L}R x$. Then $N =l_{R} (\sum _{R x \supseteq L}r (R x))$, since $R$ is right $p$-injective. If $L \neq N$, let $y \in N$ and $y \notin L$. Then $R y\varsupsetneqq L$. Now, if $t \in R$ such that $R t \supseteq L$, then by the definition of $N$, it follows that $R t \supseteq N \supseteq R y$. This means that $R y/L$ is a simple left module, and $J y \subseteq L$, where $J =J (R)$. But since $R y/J y$ is also a simple module, we infer that $L =J y$. Now, since $s o c (R_{R})  \subseteq ^{e s s}R_{R}$, we can find an element $z \in R$ such that $0 \neq y z \in s o c (R_{R})$. Since $R$ is a right $P F$-ring, $s o c (R_{R}) =s o c (_{R} R)$, and so $J y z =0$ and $J y \subseteq l_{R} (z R)$. But since $0 \neq R y z$, we infer that $R y\nsubseteqq l_{R} (z R)$. Since the left ideals of $R$ are totally ordered by inclusion, $J y \subseteq l_{R} (z R)\varsubsetneqq R y$. Since $R y/J y$ is a simple module, it follows that $L =J y =l_{R} (z R)$, as required. 

$(4) \Rightarrow (3)$. The implication follows from \cite[Lemma 3.2 and
Theorem 3.5]{HN}. 
\end{proof}

\begin{remark}
We are unable to prove or disprove that $(2) \Leftrightarrow (3)$ in Proposition \ref{Local hypercyclic rings with ess socle are dual rings}
above. 
\end{remark}

The next result is an immediate consequence of Proposition \ref{Local hypercyclic rings with ess socle are dual rings} and \cite[Theorem 5.3]{HN}.

\begin{corollary}
If $R$ is a local right hypercyclic right duo ring, then every finitely generated right (left) $R$-module has finite Goldie dimension. 
\end{corollary}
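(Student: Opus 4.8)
The plan is to feed the structural information extracted in Proposition \ref{Local hypercyclic rings with ess socle are dual rings} into the module-theoretic statement recorded in \cite[Theorem 5.3]{HN}. First I would note that a local right hypercyclic right duo ring $R$ satisfies condition $(1)$ of Proposition \ref{Local hypercyclic rings with ess socle are dual rings}, hence also $(2)$, $(3)$ and $(4)$; in particular $R$ is a dual ring with $soc (R_{R})  \subseteq ^{ess} R_{R}$. Combining this with Corollary \ref{Properties of local hypercyclic rings I} and Corollary \ref{Properties of local hypercyclic rings II}, the ring $R$ is simultaneously right self-injective, left and right uniserial, and --- being local with essential right socle and right self-injective --- a right pseudo-Frobenius ring. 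Thus $R$ lies squarely in the class of rings to which \cite[Theorem 5.3]{HN} applies.

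Next I would invoke \cite[Theorem 5.3]{HN}, which asserts that over such a dual ring every finitely generated one-sided module has finite Goldie dimension, and then read off the conclusion on both the right and the left: this is legitimate because the dual-ring hypothesis is left--right symmetric and, by Corollary \ref{Properties of local hypercyclic rings II}, $R$ is also left uniserial, so the left-hand instance of \cite[Theorem 5.3]{HN} is available as well. Since all the substantive work --- right self-injectivity, essentiality of the socle, the two-sided annihilator conditions --- has already been carried out in Proposition \ref{Local hypercyclic rings with ess socle are dual rings}, no further computation is required; the corollary is genuinely immediate once the hypotheses of \cite[Theorem 5.3]{HN} have been verified.

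The only point that needs care, and hence the main obstacle, is the bookkeeping of hypotheses: one must match the ``right duo'' assumption of the corollary, transported through the chain $(1)\Leftrightarrow(2)\Rightarrow(3)\Leftrightarrow(4)$ of Proposition \ref{Local hypercyclic rings with ess socle are dual rings} and through Corollary \ref{Properties of local hypercyclic rings II}, to whatever precise form of the dual / one-sidedly duo / uniserial hypothesis \cite[Theorem 5.3]{HN} is stated under, so that the cited theorem genuinely applies to $R$ on both sides. Everything beyond this verification is a one-line deduction.
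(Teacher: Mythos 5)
Your argument is exactly the paper's: the authors also derive the corollary by running the chain $(1)\Rightarrow(2)\Rightarrow(3)\Leftrightarrow(4)$ of Proposition \ref{Local hypercyclic rings with ess socle are dual rings} to conclude that $R$ is a dual ring and then citing \cite[Theorem 5.3]{HN}. The extra observations about self-injectivity, uniseriality and the $PF$ property are harmless but not needed; the paper treats the result as an immediate consequence of precisely the two ingredients you identify.
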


\section*{Acknowledgments}
Part of this work was carried out while the second author was visiting the Mathematics Department of the University of Porto, Portugal. He would like to express his gratitude to the members of the mathematics department at University of Porto for the financial support and the warm reception. The third author thanks the host for the hospitality received during his visit to Ohio State University when part of the work was carried out there. The first author was  partially supported by CMUP, member of LASI, which is financed by national funds through FCT - Fundação para a Ciência e a Tecnologia, I.P., under the projects with reference UIDB/00144/2020 and UIDP/00144/2020. The research of the second author was supported by the Mathematics Research Institute of the Ohio State University, and that of the third author by a Discovery Grant from NSERC of Canada.

\end{document}